\newtheorem{theorem}{Theorem}
\newtheorem{corollary}[theorem]{Corollary}
\newtheorem{proposition}[theorem]{Proposition}
\newtheorem{remark}[theorem]{Remark}
\newtheorem{lemma}[theorem]{Lemma}
\newtheorem{definition}[theorem]{Definition}
\newtheorem{example}[theorem]{Example}
\newcommand{\sn}{{\rm sn\,}}
\newcommand{\cn}{{\rm cn\,}}
\newcommand{\dn}{{\rm dn\,}}
\newcommand{\des}{{\rm des\,}}
\newcommand{\mss}{\mathfrak{S}}
\newcommand{\msn}{\mathfrak{S}_n}
\newcommand{\ms}{\mathfrak{S}}
\newcommand{\lrf}[1]{\lfloor #1\rfloor}
\newcommand{\lrc}[1]{\lceil #1\rceil}
\newcommand{\Eulerian}[2]{\genfrac{<}{>}{0pt}{}{#1}{#2}}
\title[Dumont differential system]{Several variants of the Dumont differential system and permutation statistics}
\author[S.-M.~Ma]{Shi-Mei~Ma}
\address{School of Mathematics and Statistics,
        Northeastern University at Qinhuangdao,
         Hebei 066000, P. R. China}
\email{shimeimapapers@163.com (S.-M. Ma)}
\author[T.~Mansour]{Toufik Mansour}
\address{Department of Mathematics, University of Haifa, Haifa 3498838, Israel}
\email{toufik@math.haifa.ac.il (T. Mansour)}
\author[D.G.L. Wang]{David G.L. Wang}
\address{School of Mathematics and Statistics, Beijing Institute of Technology, 102488 Beijing, P. R. China}
\email{glw@bit.edu.cn  (D.G.L. Wang)}
\author[Y.-N. Yeh]{Yeong-Nan Yeh}
\address{Institute of Mathematics,
        Academia Sinica, Taipei 10617, Taiwan}
\email{mayeh@math.sinica.edu.tw (Y.-N. Yeh)}
\begin{document}

\maketitle

\begin{abstract}
The Dumont differential system on the Jacobi elliptic functions was introduced by Dumont (Math Comp, 1979, 33: 1293--1297) and was extensively studied by Dumont, Viennot, Flajolet and so on. In this paper, we first present a labeling scheme for the cycle structure of permutations. We then introduce two types of Jacobi-pairs of differential equations. We present a general method to derive the solutions of
these differential equations. As applications, we present some characterizations for several permutation statistics.
\bigskip\\
{\sl Keywords:} Jacobi elliptic functions; Dumont differential system; Permutation statistics; Context-free grammars
\end{abstract}
\section{Introduction}
The Jacobi elliptic functions occur naturally in geometry, analysis, number theory, algebra and combinatorics (see~\cite{Dumont81,Flajolet89,Flajolet09,Viennot80} for instance).
The three basic {\it Jacobi elliptic functions} $\sn(u,k),\cn(u,k),\dn(u,k)$ are respectively defined by
\begin{align*}
&u=\int_{0}^{\sn(u,k)}\frac{d t}{\sqrt{(1-t^2)(1-k^2t^2)}}, \\
&\cn(u,k)=\sqrt{1-\sn^2(u,k)}, \\
&\dn(u,k)=\sqrt{1-k^2\sn^2(u,k)},
\end{align*}
where the modulus is often confined to the normal case $0<k<1$.
These functions are generalizations of the trigonometric functions and hyperbolic functions satisfying
$$\sn(u,0)=\sin u, \cn(u,0)=\cos u, \dn(u,0)=1,$$ $$\sn(u,1)=\tanh u, \cn(u,1)=\dn(u,1)=\mathrm{sech} u.$$
The Taylor series expansions of these Jacobian elliptic functions are given as follows:
\begin{align*}
&\sn(u,k)=u-(1+k^2)\frac{u^3}{3!}+(1+14k^2+k^4)\frac{u^5}{5!}+\cdots, \\
&\cn(u,k)=1-\frac{u^2}{2!}+(1+4k^2)\frac{u^4}{4!}-(1+44k^2+16k^4)\frac{u^6}{6!}+\cdots, \\
&\dn(u,k)=1-k^2\frac{u^2}{2!}+k^2(4+k^2)\frac{u^4}{4!}-k^2(16+44k^2+k^4)\frac{u^6}{6!}+\cdots.
\end{align*}
Using formal methods, Abel~\cite{Abel1826} discovered the following differential system:
\begin{equation}\label{Abel-diff}
\left\{
  \begin{array}{ll}
    \frac{d}{du}\sn(u,k)=\cn(u,k)\dn(u,k),  \\
    \frac{d}{du}\cn(u,k)=-\sn(u,k)\dn(u,k),  \\
    \frac{d}{du}\dn(u,k)=-k^2\sn(u,k)\cn(u,k).
  \end{array}
\right.
\end{equation}

Let $\msn$ denote the symmetric group of all permutations of $[n]$, where $[n]=\{1,2,\ldots,n\}$.
An {\it interior peak} in $\pi$ is an index $i\in\{2,3,\ldots,n-1\}$ such that $\pi(i-1)<\pi(i)>\pi(i+1)$.
Given a permutation $\pi\in\msn$, a value $i\in[n]$ is called a {\it cycle peak} if $\pi^{-1}(i)<i>\pi(i)$.
Throughout this paper, we always let $X(\pi)$ (resp., $Y(\pi)$) be the number of odd (resp., even) cycle peaks of $\pi$. For example,
for $\pi=241365$, we have $X(\pi)=0$ and $Y(\pi)=2$.

Let $D$ be the derivative operator on the polynomials in three
variables. The {\it Dumont differential system} on the Jacobi elliptic functions is
defined by
\begin{equation}\label{diff-elliptic}
\left\{
  \begin{array}{ll}
    D(x)=yz,  \\
    D(y)=xz,\\
    D(z)=xy.
  \end{array}
\right.
\end{equation}
For $n\geq0$, we define the numbers $s_{n,i,j}$ by
\begin{equation}\label{Schett-Dumontsystem}
\begin{split}
D^{2n}(x)&=\sum_{i,j\geq 0}s_{2n,i,j}x^{2i+1}y^{2j}z^{2n-2i-2j},\\
D^{2n+1}(x)&=\sum_{i,j\geq 0}s_{2n+1,i,j}x^{2i}y^{2j+1}z^{2n-2i-2j+1}.
\end{split}
\end{equation}

The study of~\eqref{diff-elliptic} was initiated by Schett~\cite{Schett76} (in a slightly
different form) and he found that $$\sum_{i,j\geq0}s_{n,i,j}=n!,~\sum_{j\geq0}s_{n,i,j}=P_{n,\lrf{(n-1)/{2}}-i},$$
where $P_{n,k}$ is the number of permutations in $\msn$ with $k$ interior peaks.
Dumont~\cite{Dumont79} deduced the recurrence relation
\begin{equation}\label{Dumont-recu}
\begin{split}
s_{2n,i,j}&=(2j+1)s_{2n-1,i,j}+(2i+2)s_{2n-1,i+1,j-1}+(2n-2i-2j+1)s_{2n-1,i,j-1},\\
s_{2n+1,i,j}&=(2i+1))s_{2n,i,j}+(2j+2)s_{2n,i-1,j+1}+(2n-2i-2j+2)s_{2n,i-1,j},
\end{split}
\end{equation}
and established that
\begin{equation}\label{snij-Dumont}
s_{n,i,j}=|\{\pi\in\msn: X(\pi)=i, Y(\pi)=j\}|.
\end{equation}
Moreover, Dumont~\cite[Corollary~1]{Dumont79} obtained the following result:
\begin{itemize}
          \item [(i)] the coefficient of $(-1)^nk^{2j}{u^{2n+1}}/{(2n+1)!}$ in the
Taylor expansion of $\sn(u,k)$ is equal to the number of permutations in $\ms_{2n}$ (or in $\ms_{2n+1}$)
having $j$ even cycle peaks and with no odd cycle peaks;
          \item [(ii)] the coefficient of $(-1)^nk^{2i}{u^{2n}}/{(2n)!}$ (resp. $(-1)^nk^{2n-2i}{u^{2n}}/{(2n)!}$) in the
Taylor expansion of $\cn(u,k)$ (resp. $\dn(u,k)$) is equal to the number of permutations in $\ms_{2n-1}$ (or in $\ms_{2n}$) having $i$ odd cycle peaks and with no even cycle peaks.
        \end{itemize}
Subsequently,
Dumont~\cite{Dumont81} studied the
symmetric variant of~\eqref{Abel-diff}:
\begin{equation*}\label{Dumount-diff}
\begin{split}
\frac{d}{du}\sn(u;a,b)&=\cn(u;a,b)\dn(u;a,b),\\
\frac{d}{du}\cn(u;a,b)&=a^2\sn(u;a,b)\dn(u;a,b),\\
\frac{d}{du}\dn(u;a,b)&=b^2\sn(u;a,b)\cn(u;a,b),
\end{split}
\end{equation*}
with the initial conditions $\sn(0;a,b)=0, \cn(0;a,b)=1$ and $\dn(0;a,b)=1$.
In particular, for the Dumont differential system~\eqref{diff-elliptic},
Dumont~\cite[Proposition 2.1]{Dumont81} showed that
\begin{equation}\label{Dumont-explicit}
\sum_{n\geq 0}D^n(x)\frac{u^n}{n!}=\frac{yz\sn(u;v,w)+x\cn(u;v,w)\dn(u;v,w)}{1-x^2\sn^2(u;v,w)},
\end{equation}
where $v=\sqrt{y^2-x^2}$ and $w=\sqrt{z^2-x^2}$.

The grammatical method was systematically introduced by Chen~\cite{Chen93}
in the study of exponential structures in combinatorics. Many combinatorial structures can be generated by using context-free
grammars. We refer the reader to~\cite{Chen17,Ma1302,Ma1303} for recent progress on this topic.
Let $A$ be an alphabet whose letters are regarded as independent commutative indeterminates.
A {\it context-free grammar} $G$ over $A$ is defined as a set
of substitution rules that replace a letter in $A$ by a formal function over $A$.
The formal derivative $D$ is a linear operator defined with respect to a context-free grammar $G$.
It is clear that~\eqref{diff-elliptic} is equivalent to the context-free grammar
\begin{equation}\label{grammar-Schett}
G=\{x\rightarrow yz, y\rightarrow xz,z\rightarrow xy\}.
\end{equation}

This paper is organized as follows. In Section~\ref{Section-2}, we present
a constructive proof of~\eqref{snij-Dumont} by using the grammatical labeling introduced by Chen and Fu~\cite{Chen17}.
In Section~\ref{Section-3},
we introduce and study two types of Jacobi-pairs of differential equations.
In Section~\ref{Section-4}, we present some characterizations for several permutation statistics.
\section{A constructive proof of~\eqref{snij-Dumont}}\label{Section-2}
In this section, we always write $\pi\in\msn$ using the {\it standard
cycle decomposition}, where each cycle is written with its smallest entry first and the
cycles are written in increasing order of their smallest entry.
In what follows, we present a labeling scheme for the cycle structure of permutations.

Let $$\mss_{n,i,j}=\{\pi\in\msn: X(\pi)=i, Y(\pi)=j\}.$$
\begin{definition}\label{def01}
Let $\pi\in\mss_{n,i,j}$. 
Then we put the superscript label $x$ immediately before and right after each odd cycle peak of $\pi$, and
we put the superscript label $y$ immediately before and right after each even cycle peak.
In each of the remaining positions
except the first position of each cycle, we put the superscript label $z$. Moreover, we put the superscript label $x$ (resp. $y$) at the end of $\pi$ if $n$ is even (resp. odd).
\end{definition}

For example, for $\pi=(132)(45)(68)(7)\in\mss_{8,2,1}$ and $\pi'=(132)(45)\in\mss_{5,2,0}$, the labeled $\pi$ and $\pi'$ are respectively given by $$(1^{x}3^{x}2^z)(4^{x}5^{x})(6^{y}8^{y})(7^z)^x,~(1^{x}3^{x}2^z)(4^{x}5^{x})^y.$$

When $n=1$, we have $\mss_{1,0,0}=\{(1^z)^y\}$. When $n=2$, we have $\mss_{2,0,0}=\{(1^z)(2^z)^x\}$ and $\mss_{2,0,1}=\{(1^y2^y)^x\}$.
Let $n=m$. Suppose we get all labeled permutations in $\mss_{m,i,j}$ for all $i,j$, where $m\geq 2$. We now consider the case $n=m+1$.
Let $\widehat{\pi}\in\mss_{m+1}$ be obtained from $\pi\in\mss_{m,i,j}$ by inserting the entry $m+1$ into $\pi$.
In the following, we construct a correspondence, denoted by $\Phi$, between $\pi$ and $\widehat{\pi}$.

If $m$ is odd and the entry $m+1$ is inserted at the end of $\pi$ as a new cycle $(m+1)$, then
we leave all labels of $\pi$ unchanged except the last label $y$. We define $\Phi$ by
$$\pi=\cdots (\cdots)^y\leftrightarrow\widehat{\pi}=\cdots (\cdots)((m+1)^{z})^x,$$
which corresponds to the operation $y\rightarrow xz$. Note that $X(\widehat{\pi})=X(\pi)$ and $Y(\widehat{\pi})=Y(\pi)$.
Hence $\widehat{\pi}\in\mss_{m+1,i,j}$.
If $m$ is odd and the entry $m+1$ occurs in a cycle with at least two elements,
there are three cases to consider:
\begin{enumerate}
  \item [\rm ($i$)] Suppose $c_r$ is the $r$th odd cycle peak of $\pi$ and we put the entry $m+1$ immediately before or right after $c_r$. Then we have
 $$\pi=\cdots (\ldots^xc_r~^x\ldots)\cdots(\cdots)^y\leftrightarrow\widehat{\pi}=\cdots (\ldots ^y(m+1)^yc_r~^z\ldots)\cdots(\cdots)^x,$$
  or $$\pi=\cdots (\ldots^xc_r~^x\ldots)\cdots(\cdots)^y\leftrightarrow\widehat{\pi}=\cdots (\ldots ^zc_r~^y(m+1)^y\ldots)\cdots(\cdots)^x.$$
In this case, the corresponding operation of $\Phi$ is $x\rightarrow yz$ and we have $\widehat{\pi}\in\mss_{m+1,i-1,j+1}$.
 \item [\rm ($ii$)] Suppose $d_\ell$ is the $\ell$th even cycle peak of $\pi$ and we put the entry $m+1$ immediately before or right after $d_\ell$. Then we have
 $$\pi=\cdots (\ldots^yd_\ell~^y\ldots)\cdots(\cdots)^y\leftrightarrow\widehat{\pi}=\cdots (\ldots ^y(m+1)^yd_\ell~^z\ldots)\cdots(\cdots)^x,$$
  or $$\pi=\cdots (\ldots^yd_\ell~^y\ldots)\cdots(\cdots)^y\leftrightarrow\widehat{\pi}=\cdots (\ldots ^zd_{\ell}~^y(m+1)^y\ldots)\cdots(\cdots)^x.$$
In this case, the corresponding operation of $\Phi$ is $y\rightarrow xz$ and we have $\widehat{\pi}\in\mss_{m+1,i,j}$.
  \item [\rm ($iii$)] If we insert $m+1$ into a position of $\pi$ with label $z$, then we have
  $$\pi=\cdots (\ldots w~^{z}\ldots)\cdots(\cdots)^y\leftrightarrow\widehat{\pi}=\cdots (\ldots w ^y(m+1)^y\ldots)\cdots(\cdots)^x.$$
  In this case, the corresponding operation of $\Phi$ is $z\rightarrow xy$ and we have $\widehat{\pi}\in\mss_{m+1,i,j+1}$.
  \end{enumerate}
If $m$ is even and the entry $m+1$ is inserted at the end of $\pi$ as a new cycle $(m+1)$, then
we leave all labels of $\pi$ unchanged except the last label $x$. We define $\Phi$ by
$$\pi=\cdots (\cdots)^x\leftrightarrow\widehat{\pi}=\cdots (\cdots)((m+1)^{z})^y,$$
which corresponds to the operation $x\rightarrow yz$. In this case, we have $\widehat{\pi}\in\mss_{m+1,i,j}$.
If $m$ is even and the entry $m+1$ occurs in a cycle with at least two elements,
there are also three cases to consider:
\begin{enumerate}
  \item [\rm ($i$)] Suppose $c_r$ is the $r$th odd cycle peak of $\pi$ and we put the entry $m+1$ immediately before or right after $c_r$. Then we have
  $$\pi=\cdots (\ldots^xc_r~^x\ldots)\cdots(\cdots)^x\leftrightarrow\widehat{\pi}=\cdots (\ldots ^x(m+1)^xc_r~^z\ldots)\cdots(\cdots)^y,$$
  or $$\pi=\cdots (\ldots^xc_r~^x\ldots)\cdots(\cdots)^x\leftrightarrow\widehat{\pi}=\cdots (\ldots ^zc_r~^x(m+1)^x\ldots)\cdots(\cdots)^y.$$
 In this case, the corresponding operation of $\Phi$ is $x\rightarrow yz$ and we have $\widehat{\pi}\in\mss_{m+1,i,j}$.
 \item [\rm ($ii$)] Suppose $d_\ell$ is the $\ell$th even cycle peak of $\pi$ and we put the entry $m+1$ immediately before or right after $d_\ell$. Then we have
 $$\pi=\cdots (\ldots^yd_\ell~^y\ldots)\cdots(\cdots)^x\leftrightarrow\widehat{\pi}=\cdots (\ldots ^x(m+1)^xd_\ell~^z\ldots)\cdots(\cdots)^y,$$
  or $$\pi=\cdots (\ldots^yd_\ell~^y\ldots)\cdots(\cdots)^x\leftrightarrow\widehat{\pi}=\cdots (\ldots ^zd_\ell~^x(m+1)^x\ldots)\cdots(\cdots)^y.$$
 In this case, the corresponding operation of $\Phi$ is $y\rightarrow xz$ and we have $\widehat{\pi}\in\mss_{m+1,i+1,j-1}$.
  \item [\rm ($iii$)] If we insert $m+1$ into a position of $\pi$ with label $z$, then we have
  $$\pi=\cdots (\ldots w~^{z}\ldots)\cdots(\cdots)^x\leftrightarrow\widehat{\pi}=\cdots (\ldots w ^x(m+1)^x\ldots)\cdots(\cdots)^y.$$
 In this case, the corresponding operation of $\Phi$ is $z\rightarrow xy$ and we have $\widehat{\pi}\in\mss_{m+1,i+1,j}$.
  \end{enumerate}
By induction and~\eqref{Dumont-recu}, we see that $\Phi$ is the desired correspondence between permutations in $\mss_{m}$ and $\mss_{m+1}$,
which also gives a constructive proof of~\eqref{snij-Dumont}.

\begin{example}
Given $\pi=(14)(23)\in\mss_{4,1,1}$.
The correspondence between $\pi$ and $x^3y^2$ is built up as follows:
$$(1^z)^\textbf{y}\leftrightarrow{ y\rightarrow xz} (1^z)(2^\textbf{z})^x\leftrightarrow{z\rightarrow xy}  (1^\textbf{z})(2^x3^x)^y\leftrightarrow{z\rightarrow xy} (1^y4^y)(2^x3^x)^x.$$
\end{example}
\section{Solutions of two types of Jacobi-pairs}\label{Section-3}
\subsection{Basic definitions and notation}
\hspace*{\parindent}

Let
\begin{equation}\label{ellipticintegral}
F(x,k)=\int_{0}^{x}\frac{d t}{\sqrt{(1-t^2)(1-k^2t^2)}},
\end{equation}
which is the incomplete elliptic integral of the first kind in Jacobi's form.
Define
\begin{equation*}
\begin{split}
h_{p,q}&=F\left(\sqrt{\frac{q(1-p)}{q-p}},\sqrt{\frac{q-p}{1-p}}\right),\\
\ell_{p,q}&=F\left(q\sqrt{\frac{1-p}{q^2-p}},\sqrt{\frac{q^2-p}{1-p}}\right),\\
k_{p,q}&=\sqrt{\frac{p-1}{q-p}}\arctan\left(\sqrt{\frac{q(p-1)}{q-p}}\right),\\
x_\pm &=(p-1)x\pm k_{p,q}.
\end{split}
\end{equation*}

For any sequence $a_{n,i,j}$, we define the following generating functions
\begin{align*}
A&=A(x,p,q)=\sum_{n,i,j\geq0}a_{n,i,j}\frac{x^{n}}{n!}p^iq^j,\\
AE&=AE(x,p,q)=\sum_{n,i,j\geq0}a_{2n,i,j}\frac{x^{2n}}{(2n)!}p^iq^j=\frac12(A(x,p,q)+A(-x,p,q)),\\
AO&=AO(x,p,q)=\sum_{n,i,j\geq0}a_{2n+1,i,j}\frac{x^{2n+1}}{(2n+1)!}p^iq^j=\frac12(A(x,p,q)-A(-x,p,q)),
\end{align*}
where we use the small letters $a,b,c,\ldots$ for sequences, capital letters $A,B,C,\ldots$ for generating functions, and $AE,BE,CE,\ldots,AO,BO,CO,\ldots$ for the even and odd parts of the generating functions, respectively. Also, we denote by $H_y$ the partial derivative of the function $H$ with respect to $y$.

Recall that the numbers $s_{n,i,j}$ are defined by~\eqref{Schett-Dumontsystem}. Then
\begin{align*}
S=S(x,p,q)&=\sum_{n,i,j\geq0}s_{n,i,j}\frac{x^{n}}{n!}p^iq^j,\\
SE=SE(x,p,q)&=\sum_{n,i,j\geq0}s_{2n,i,j}\frac{x^{2n}}{(2n)!}p^iq^j\mbox{ and }\\
SO=SO(x,p,q)&=\sum_{n,i,j\geq0}s_{2n+1,i,j}\frac{x^{2n+1}}{(2n+1)!}p^iq^j.
\end{align*}
Using~\eqref{Dumont-recu}, we get the following comparable result of~\eqref{Dumont-explicit}.
\begin{theorem}\label{lem:aa0}
We have
\begin{align}\label{eqss0}
\left\{\begin{array}{ll}
SO(x,p,q)&=\frac{\sqrt{p-1}}{2\sqrt{q}}\left(K\left(\frac{1-q}{1-p},\sqrt{p-1}x-h_{p,q}\right)-K\left(\frac{1-q}{1-p},\sqrt{p-1}x+h_{p,q}\right)\right),\\
SE(x,p,q)&=\frac{\sqrt{p-1}}{2\sqrt{p}}\left(K\left(\frac{1-q}{1-p},\sqrt{p-1}x-h_{p,q}\right)+K\left(\frac{1-q}{1-p},\sqrt{p-1}x+h_{p,q}\right)\right),
\end{array}\right.
\end{align}
where $K(p,x)=\sqrt{1-p}\cn(\sqrt{p}x,\sqrt{1-1/p}),~-1<p<1$ and $0<q<1$.
\end{theorem}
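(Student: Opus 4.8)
The plan is to extract from the Dumont recurrence~\eqref{Dumont-recu} a system of partial differential equations for the generating function $S=S(x,p,q)$ of the sequence $s_{n,i,j}$, and then to split it into its even and odd parts $SE$ and $SO$ and solve the resulting coupled PDE system by reducing it to the defining ODE system of the Jacobi elliptic functions. First I would encode the two recurrences in~\eqref{Dumont-recu} as differential-difference relations on $S$: the factors $2j+1$, $2i+2$, $2n-2i-2j+1$, etc., become the operators $2q\partial_q+1$, $2p\partial_p+2$, and $2x\partial_x-2p\partial_p-2q\partial_q+1$ acting on $S$ after the usual translation (multiplication by $p$ or $q$ realizes the index shifts $s_{\ldots,i\pm1,\ldots}$, and differentiation in $x$ produces the $n$-dependent coefficients). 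Carrying this out for the even recurrence and the odd recurrence separately yields a first-order linear system in $SE$ and $SO$ whose coefficients are linear in $p$, $q$, $x$ and the derivations $\partial_p,\partial_q,\partial_x$.

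Next I would look for the characteristic structure of this PDE system. The crucial observation is that the system should decouple upon introducing the shifted variable $\xi=\sqrt{p-1}\,x\pm h_{p,q}$ and the modulus parameter $\tfrac{1-q}{1-p}$, exactly the arguments appearing in the claimed formula. I expect the pair $(SE,SO)$ to satisfy, after this change of variables, a system isomorphic to the Abel system~\eqref{Abel-diff} for $\sn,\cn,\dn$; concretely, the combination $K(p,x)=\sqrt{1-p}\,\cn(\sqrt{p}\,x,\sqrt{1-1/p})$ is the natural $\cn$-type solution normalized so that its square and the companion $\sn,\dn$ expressions match the quadratic denominators one encounters when separating even and odd parts. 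The two summands $K(\cdots,\sqrt{p-1}x-h_{p,q})$ and $K(\cdots,\sqrt{p-1}x+h_{p,q})$ then arise as the two independent solution branches of the second-order equation obtained by eliminating one of $SE,SO$, and their symmetric and antisymmetric combinations give $SE$ and $SO$ respectively, with the prefactors $\tfrac{\sqrt{p-1}}{2\sqrt{p}}$ and $\tfrac{\sqrt{p-1}}{2\sqrt{q}}$ fixed by the initial data.

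The verification of the initial and boundary conditions is where the constants $h_{p,q}$, $\ell_{p,q}$, and the prefactors must be pinned down. From the combinatorial interpretation $s_{n,i,j}=|\{\pi\in\msn:X(\pi)=i,Y(\pi)=j\}|$ one reads off $S(0,p,q)$ and the low-order coefficients; in particular the odd part must vanish at $x=0$ and the even part must reduce to the correct value there, which forces $\sqrt{p-1}x\mp h_{p,q}$ to evaluate at the appropriate quarter-period-type argument so that the $\cn$ terms cancel (for $SO$) or add (for $SE$). I would check these conditions by evaluating $K$ at $x=0$ using $\cn(\mp h_{p,q}\cdot(\text{scale}),\cdot)$ and the definition of $h_{p,q}$ via the elliptic integral $F$ from~\eqref{ellipticintegral}, confirming that $h_{p,q}$ is precisely the value of $u$ at which the relevant $\cn$ attains the required boundary value.

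The main obstacle I anticipate is the change of variables: matching the nonstandard moduli $\sqrt{1-1/p}$ and the shifts $h_{p,q},\ell_{p,q}$ against the Abel system requires the addition and quarter-period identities for Jacobi elliptic functions (the $\sn,\cn,\dn$ addition formulas), and verifying that the quadratic denominator $1-x^2\sn^2$ implicit in Dumont's formula~\eqref{Dumont-explicit} is exactly reproduced by the product of the two shifted $\cn$ factors. Carefully tracking the algebraic identities among $p-1$, $q-p$, $q^2-p$, and $1-p$ that appear inside the various square roots — and confirming they are consistent with $0<q<1$ and $-1<p<1$ — will be the delicate bookkeeping that makes the two expressions in~\eqref{eqss0} agree as formal power series in $x$ with the coefficients prescribed by~\eqref{Dumont-recu}.
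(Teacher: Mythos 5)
Your scaffolding matches the paper's: both derive from \eqref{Dumont-recu} a coupled first-order PDE system for $SE$ and $SO$, rescale to a symmetric ($J$-pair-type) system, and apply the method of characteristics to conclude that $SO'\mp SE'$ are arbitrary functions of the invariants $\tfrac{1-q}{1-p}$ and $\sqrt{p-1}\,x\pm h_{p,q}$. But up to that point the theorem is empty of content --- it only says ``there exist functions $K_1,K_2$'' (this is exactly Remark~\ref{GenJP}). The entire substance of \eqref{eqss0} is the identification $K_2=-K_1=K$ with $K(p,x)=\sqrt{1-p}\,\cn(\sqrt{p}x,\sqrt{1-1/p})$, and this is where your plan has a genuine gap. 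You propose to pin the unknown functions down from the $x=0$ initial data; but $SO(0,p,q)=0$, $SE(0,p,q)=1$ only constrain $K_1$ and $K_2$ along the locus $\{(\tfrac{1-q}{1-p},\pm h_{p,q})\}$, so extracting the explicit $\cn$ form would force you to invert the elliptic integral defining $h_{p,q}$ (express $p$ as an elliptic function of $h$ for fixed $\tfrac{1-q}{1-p}$) and then argue by analytic continuation --- none of which you carry out. Your stated mechanism for why $\cn$ appears (``two independent solution branches of the second-order equation obtained by eliminating one of $SE,SO$'') is also not right: the characteristics give \emph{arbitrary} functions, and the elliptic functions do not come from the PDE itself but from boundary data.

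The paper's device, which your plan is missing, is to specialize to $q=0$ rather than $x=0$: there $h_{p,0}=0$, the term $\sqrt{q}\,SO$ drops out of the rescaled pair, and the system collapses to one solvable by the classical Jacobi relations, giving $SE(x,p,0)=\cn(Ix,\sqrt{p})$ and $SO(x,p,0)=-I\dn(Ix,\sqrt{p})\sn(Ix,\sqrt{p})$ from the initial conditions $SE(0,p,q)=1$, $SO(0,p,q)=0$, $SE(x,0,0)=\cosh x$, $SO(x,0,0)=\sinh x$. Since the second argument $\sqrt{p-1}\,x$ is then free, this determines $K_1$ and $K_2$ on a full two-dimensional domain and reads off $K_2(p,x)=-K_1(p,x)=K(p,x)$ directly, after which one verifies (routinely) that the resulting $SO,SE$ satisfy the original system. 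If you replace your $x=0$ normalization by this $q=0$ specialization, your argument becomes the paper's proof.
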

\begin{proof}
By \eqref{Dumont-recu}, we have
\begin{align*}
\left\{\begin{array}{ll}
SO_x&=SE+2p(1-p)SE_p+2p(1-q)SE_q+pxSE_x,\\
SE_x&=SO+2q(1-p)SO_p+2q(1-q)SO_q+qxSO_x.
\end{array}\right.
\end{align*}
Set $$(\widetilde{SO},\widetilde{SE})=\frac{1}{\sqrt{p-1}}(\sqrt{q}SO,\sqrt{p}SE).$$ Then
\begin{align}\label{Dumont-J-pair}
\left\{\begin{array}{ll}
\widetilde{SO}_x&=2\sqrt{pq}(1-p)\widetilde{SE}_p+2\sqrt{pq}(1-q)\widetilde{SE}_q+\sqrt{pq}x\widetilde{SE}_x,\\
\widetilde{SE}_x&=2\sqrt{pq}(1-p)\widetilde{SO}_p+2\sqrt{pq}(1-q)\widetilde{SO}_q+\sqrt{pq}x\widetilde{SO}_x.
\end{array}\right.
\end{align}
Solving (\ref{Dumont-J-pair}) for $\widetilde{SO}_x-\widetilde{SE}_x$ and $\widetilde{SO}_x+\widetilde{SE}_x$ (with the help of maple), we obtain that there exist two (analytical) functions $K_1$ and $K_2$ such that
\begin{align}\label{eqss1}
\left\{\begin{array}{ll}
\widetilde{SO}-\widetilde{SE}&=K_1\left(\frac{1-q}{1-p},\sqrt{p-1}x+h_{p,q}\right),\\
\widetilde{SO}+\widetilde{SE}&=K_2\left(\frac{1-q}{1-p},\sqrt{p-1}x-h_{p,q}\right).
\end{array}\right.
\end{align}
In order to provide explicit formulas for the generating functions $\widetilde{SO}_x$ and $\widetilde{SE}_x$, we solve (\ref{Dumont-J-pair}) for $q=0$. In this case, we obtain
\begin{align*}
\left\{\begin{array}{ll}
SO_x(x,p,0)&=SE(x,p,0)+2p(1-p)SE_p(x,p,0)+2pSE_q(x,p,0)+pxSE_x(x,p,0),\\
SE_x(x,p,0)&=SO(x,p,0).
\end{array}\right.
\end{align*}
Note that our initial conditions are $SO(0,p,q)=0$, $SE(0,p,q)=1$, $$SO(x,0,0)=\frac{e^x-e^{-x}}{2},~SE(x,0,0)=\frac{e^x+e^{-x}}{2}.$$ Thus, it is obvious to see that the solution of this system of partial differential equations is given by
$$SO(x,p,0)=-I\dn(Ix, \sqrt{p})\sn(Ix, \sqrt{p})\mbox{ and }SE(x,p,0)=\cn(Ix,\sqrt{p}),$$
with $I^2=-1$. Therefore, solving~\eqref{eqss1} for $q=0$ gives
\begin{align*}
-\frac{\sqrt{p}}{\sqrt{p-1}}\cn(Ix,\sqrt{p})&=K_1\left(\frac{1}{1-p},\sqrt{p-1}x\right),\\
\frac{\sqrt{p}}{\sqrt{p-1}}\cn(Ix,\sqrt{p})&=K_2\left(\frac{1}{1-p},\sqrt{p-1}x\right),
\end{align*}
which leads to $K_2(p,x)=-K_1(p,x)=K(p,x)$. Hence, by (\ref{eqss1}) we get (\ref{eqss0}), as claimed.
\end{proof}

In order to provide a unified approach to the sequences discussed in this paper,
we introduce the following definitions.

\begin{definition}
A pair $(F,G)=(F(x,p,q),G(x,p,q))$ of functions is called {\it the Jacobi-pair of the first type} if they satisfy the following system of PDEs:
\begin{align}\label{eqJpair}
\left\{\begin{array}{ll}
F_x&=2p\sqrt{q}(1-p)G_p+2p\sqrt{q}(1-q)G_q+2p\sqrt{q}xG_x,\\
G_x&=2p\sqrt{q}(1-p)F_p+2p\sqrt{q}(1-q)F_q+2p\sqrt{q}xF_x.
\end{array}\right.
\end{align}
\end{definition}

\begin{remark}\label{GenJP}
Concerning the solution to~\eqref{eqJpair}, note that by defining
$$P(x,p,q)=F(x,p,q)-G(x,p,q), ~Q(x,p,q)=F(x,p,q)+G(x,p,q),$$ we have
\begin{align*}
\left\{\begin{array}{ll}
P_x(x,p,q)+2p\sqrt{q}((1-p)P_p(x,p,q)+(1-q)P_q(x,p,q)+xP_x(x,p,q))=0,\\
Q_x(x,p,q)-2p\sqrt{q}((1-p)Q_p(x,p,q)+(1-q)Q_q(x,p,q)+xQ_x(x,p,q))=0.
\end{array}\right.
\end{align*}
Using the Maple package,
it is not hard to check that the solution (with $p,q\neq1$ and $q\neq0$) of these PDEs is given by
\begin{equation*}
P(x,p,q)=V\left(\frac{1-q}{1-p},x_+\right),~
Q(x,p,q)=\widetilde{V}\left(\frac{1-q}{1-p},x_-\right),
\end{equation*}
for any two functions $V$ and $\widetilde{V}$.
\end{remark}

\begin{definition}
A pair $(M,N)=(M(x,p,q),N(x,p,q))$ of functions is called {\it the Jacobi-pair of the second type} if they satisfy the following system of PDEs:
\begin{align}\label{eqJpair-2}
\left\{\begin{array}{ll}
M_x&=2q\sqrt{p}(1-p)N_p+\sqrt{p}(1-q^2)N_q+xq\sqrt{p}N_x,\\
N_x&=2q\sqrt{p}(1-p)M_p+\sqrt{p}(1-q^2)M_q+xq\sqrt{p}M_x.
\end{array}\right.
\end{align}
\end{definition}

\begin{remark}\label{GenJP-2}
Concerning the solution to~\eqref{eqJpair-2}, note that by defining
$$\widetilde{P}(x,p,q)=M(x,p,q)-N(x,p,q), ~\widetilde{Q}(x,p,q)=M(x,p,q)+N(x,p,q),$$ we have
\begin{align*}
\left\{\begin{array}{ll}
\widetilde{P}_x(x,p,q)+\sqrt{q}(2q(1-p)\widetilde{P}_p(x,p,q)+(1-q^2)\widetilde{P}_q(x,p,q)+xq\widetilde{P}_x(x,p,q))=0,\\
\widetilde{Q}_x(x,p,q)-\sqrt{q}(2q(1-p)\widetilde{Q}_p(x,p,q)+(1-q^2)\widetilde{Q}_q(x,p,q)+xq\widetilde{Q}_x(x,p,q))=0.
\end{array}\right.
\end{align*}
Using the Maple package,
it is not hard to check that the solution (with $p,q\neq1$ and $q\neq0$) of these PDEs is given by
\begin{equation*}
\widetilde{P}(x,p,q)=W\left(\frac{1-q^2}{1-p},\sqrt{p-1}x-\ell_{p,q}\right),~
\widetilde{Q}(x,p,q)=\widetilde{W}\left(\frac{1-q^2}{1-p},\sqrt{p-1}x+\ell_{p,q}\right),
\end{equation*}
for any two functions $W$ and $\widetilde{W}$.
\end{remark}
\subsection{Jacobi-pairs of the first type}
\hspace*{\parindent}

There are countless combinatorial structures related to the differential operators $xD$ and $Dx$ (e.g.,~\cite{Flajolet09,Joyal81,Ma13}). It is natural to further study~\eqref{diff-elliptic} via these differential operators.

Write
$$(xD)^{n+1}(x)=(xD)(xD)^n(x)=xD((xD)^n(x)),$$
$$(Dx)^{n+1}(x)=(Dx)(Dx)^n(x)=D(x(Dx)^n(x)),$$
$$(Dx)^{n+1}(y)=(Dx)(Dx)^n(y)=D(x(Dx)^n(y)).$$

In particular, from~\eqref{diff-elliptic}, we have
\begin{equation*}
\begin{split}
(xD)(x)&=xyz,\quad
(xD)^2(x)=xy^2z^2+x^3y^2+x^3z^2,\\
(Dx)(x)&=2xyz,\quad
(Dx)^2(x)=4xy^2z^2+2x^3y^2+2x^3z^2,\\
(Dx)(y)&=y^2z+x^2z,\quad
(Dx)^2(y)=y^3z^2+5x^2yz^2+x^2y^3+x^4y.
\end{split}
\end{equation*}

For $n\geq 0$, we define the numbers $a_{n,i,j},c_{n,i,j}$ and $d_{n,i,j}$ by
\begin{align*}
(xD)^{2n}(x)&=\sum_{i,j\geq 0}a_{2n,i,j}x^{2i+1}y^{2j}z^{4n-2i-2j},\\
(xD)^{2n+1}(x)&=\sum_{i,j\geq 0}a_{2n+1,i,j}x^{2i+1}y^{2j+1}z^{4n-2i-2j+1},\\
(Dx)^{2n}(x)&=\sum_{i,j\geq 0}c_{2n,i,j}x^{2i+1}y^{2j}z^{4n-2i-2j},\\
(Dx)^{2n+1}(x)&=\sum_{i,j\geq 0}c_{2n+1,i,j}x^{2i+1}y^{2j+1}z^{4n-2i-2j+1},\\
(Dx)^{2n}(y)&=\sum_{i,j\geq 0}d_{2n,i,j}x^{2i}y^{2j+1}z^{4n-2i-2j},\\
(Dx)^{2n+1}(y)&=\sum_{i,j\geq 0}d_{2n+1,i,j}x^{2i}y^{2j}z^{4n-2i-2j+3}.
\end{align*}

For convenience, we list the first terms of the corresponding generating functions:
\begin{align*}
A(x,p,q)&=1+x+(p(1+q)+q)\frac{x^2}{2!}+(4p^2+5p(1+q)+q)\frac{x^3}{3!}\\
&+(p^3(4+4q)+p^2(5+50q+5q^2)+p(18q^2+18q)+q^2)\frac{x^4}{4!}\\
&+(16p^4+p^3(148+148q)+p^2(61+394q+61q^2)+p(58q+58q^2)+q^2)\frac{x^5}{5!}+\cdots,
\end{align*}
\begin{align*}
C(x,p,q)&=1+2x+2(p(1+q)+2q)\frac{x^2}{2!}+8(p^2+2p(1+q)+q)\frac{x^3}{3!}\\
&+8(p^3(1+q)+2p^2(1+9q+q^2)+11pq(1+q)+2q^2)\frac{x^4}{4!}\\
&+16(2p^4+26p^3(1+q)+p^2(17+98q+17q^2)+26pq(1+q)+2q^2)\frac{x^5}{5!}+\cdots,
\end{align*}
\begin{align*}
D(x,p,q)&=1+(p+q)x+(p^2+p(5+q)+q)\frac{x^2}{2!}+(p^3+p^2(5+18q)+pq(18+5q)+q^2)\frac{x^3}{3!}\\
&+(p^4+p^3(58+18q)+p^2(61+164q+5q^2)+pq(58+18q)+q^2)\frac{x^4}{4!}+\cdots.
\end{align*}

Note that
\begin{align*}
(xD)^{2n+1}(x)&=(xD)(xD)^{2n}(x)\\
&=xD\left(\sum_{i,j\geq 0}a_{2n,i,j}x^{2i+1}y^{2j}z^{4n-2i-2j}\right)\\
&=\sum_{i,j\geq 0}(2i+1)a_{2n,i,j}x^{2i+1}y^{2j+1}z^{4n-2i-2j+1}+\sum_{i,j\geq 0}2ja_{2n,i,j}x^{2i+3}y^{2j-1}z^{4n-2i-2j+1}+\\
&\quad \sum_{i,j\geq 0}(4n-2i-2j)a_{2n,i,j}x^{2i+3}y^{2j+1}z^{4n-2i-2j-1}.
\end{align*}
Hence
\begin{equation}\label{a-recu-1}
a_{2n+1,i,j}=(2i+1)a_{2n,i,j}+(2j+2)a_{2n,i-1,j+1}+(4n-2i-2j+2)a_{2n,i-1,j}.
\end{equation}
Similarly,
\begin{equation}\label{a-recu-2}
a_{2n,i,j}=(2i+1)a_{2n-1,i,j-1}+(2j+1)a_{2n-1,i-1,j}+(4n-2i-2j+1)a_{2n-1,i-1,j-1}.
\end{equation}

Equivalently, recurrences~\eqref{a-recu-1} and~\eqref{a-recu-2} can be written as the following lemma.
\begin{lemma}\label{lem:aa1}
We have
\begin{align*}
\left\{\begin{array}{ll}
AO_x&=AE+2p(1-p)AE_p+2p(1-q)AE_q+2xpAE_x,\\
AE_x&=(q+p-pq)AO+2pq(1-p)AO_p+2pq(1-q)AO_q+2xpqAO_x.
\end{array}\right.
\end{align*}
Equivalently, $(\widetilde{AO},\widetilde{AE})$ is a Jacobi-pair of the first type, where $\widetilde{AO}=\sqrt{\frac{pq}{p-1}}AO$ and $\widetilde{AE}=\sqrt{\frac{p}{p-1}}AE$.
\end{lemma}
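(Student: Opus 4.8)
The plan is to derive the two displayed PDEs for $AO$ and $AE$ directly from the recurrences \eqref{a-recu-1} and \eqref{a-recu-2}, and then to perform the normalizing substitution that recasts them as a $J$-pair of the first type. First I would translate each recurrence into a differential equation for the generating function, exploiting the standard dictionary between index shifts and the operators of multiplication and differentiation in $p,q,x$. Concretely, I would write $AO_x=\sum a_{2n+1,i,j}\frac{x^{2n}}{(2n)!}p^iq^j$ and match this against the right-hand side of \eqref{a-recu-1}: the factor $(2i+1)$ corresponds to the operator $1+2p\partial_p$ acting on $AE$, the shift $a_{2n,i-1,j+1}$ with coefficient $(2j+2)$ corresponds to $2p\partial_q$ followed by the appropriate bookkeeping, and the term $(4n-2i-2j+2)a_{2n,i-1,j}$ encodes the $z$-degree, which in generating-function terms produces the combination involving $xAE_x$ together with the $p$-multiplication. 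Carefully collecting these contributions should reproduce
\begin{align*}
AO_x&=AE+2p(1-p)AE_p+2p(1-q)AE_q+2xpAE_x.
\end{align*}
The companion equation for $AE_x$ comes the same way from \eqref{a-recu-2}, the only subtlety being the extra factor $(q+p-pq)$ in front of $AO$, which arises because the three index shifts in \eqref{a-recu-2} distribute differently among the $p$- and $q$-multiplications.

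The most delicate bookkeeping step is tracking the total degree $4n$ and how the exponent $4n-2i-2j$ on $z$ turns into the Euler-type operator. When I apply $xD$ or $Dx$ and read off the $z$-power, the quantity $4n-2i-2j$ (or its shifted versions) must be re-expressed purely in terms of $i,j$ and the differentiation level $n$; since $2n$ is recovered from $x\partial_x$ acting on the $\frac{x^{2n}}{(2n)!}$ factor, the coefficient $(4n-2i-2j+2)$ becomes a linear combination of the identity, $p\partial_p$, $q\partial_q$, and $x\partial_x$. Getting the precise scalar coefficients $2p(1-p)$, $2p(1-q)$ and $2xp$ right is where sign and factor-of-two errors are most likely to creep in, so I would verify the resulting PDEs against the explicit low-order expansion of $A(x,p,q)$ listed just above the lemma, checking the coefficients through $x^3$ or $x^4$.

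With the two PDEs in hand, the final step is purely algebraic. I would set $AO'=\sqrt{pq/(p-1)}\,AO$ and $AE'=\sqrt{p/(p-1)}\,AE$ and substitute $AO=\sqrt{(p-1)/(pq)}\,AO'$, $AE=\sqrt{(p-1)/p}\,AE'$ into the derived system. Because the prefactors $\sqrt{pq/(p-1)}$ and $\sqrt{p/(p-1)}$ depend on $p$ and $q$ but not on $x$, the derivatives $AO_p,AO_q$ and $AE_p,AE_q$ pick up additional terms from differentiating the radical factors; the whole point of this particular choice of normalization is that those extra terms cancel the asymmetry between the two original equations and produce the symmetric coefficient $2p\sqrt{q}$ appearing in \eqref{eqJpair}. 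I expect this cancellation to be the conceptual crux: matching the transformed equations term-by-term against the defining system \eqref{eqJpair} of a $J$-pair of the first type, and confirming that the mismatched factor $(q+p-pq)$ in the $AE_x$ equation is exactly absorbed by the chain-rule contributions, thereby establishing that $(AO',AE')$ is a $J$-pair of the first type as claimed.
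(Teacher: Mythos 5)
Your proposal is correct and follows exactly the route the paper takes (the paper simply asserts that the recurrences \eqref{a-recu-1} and \eqref{a-recu-2} ``can be written as'' the lemma, leaving the generating-function translation and the normalization check implicit). Your operator dictionary is right — in particular the third term of each recurrence does yield $p(2xAE_x-2pAE_p-2qAE_q)$ and $pq(2xAO_x-2pAO_p-2qAO_q-AO)$, and the chain-rule terms from $\sqrt{pq/(p-1)}$ and $\sqrt{p/(p-1)}$ do absorb the zeroth-order coefficients $1$ and $(p+q-pq)$ to produce the symmetric system \eqref{eqJpair}.
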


\begin{theorem}\label{th-aa}
Let $y=\frac{1-q}{1-p}$. Define
$$G(x,p)=\sqrt{\frac{1-p}{\cos^2(x\sqrt{p(1-p)})-p}}\mbox{ and }H(x,p)=\frac{(1-p)\sin(2x\sqrt{p(1-p)})}{2\sqrt{p}(\cos^2(x\sqrt{p(1-p)})-p)^{3/2}}.$$
Then
\begin{align*}
AO(x,p,q)=\frac{1}{2}\sqrt{\frac{p-q}{pq}}(H(yx_-,1-1/y)-G(yx_+,1-1/y)),\\
AE(x,p,q)=\frac{1}{2}\sqrt{\frac{p-q}{p}}(H(yx_-,1-1/y)+G(yx_+,1-1/y)).
\end{align*}
\end{theorem}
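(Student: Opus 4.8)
The plan is to mimic the proof of Theorem~\ref{lem:aa0}. By Lemma~\ref{lem:aa1} the pair $(AO',AE')=\bigl(\sqrt{pq/(p-1)}\,AO,\ \sqrt{p/(p-1)}\,AE\bigr)$ is a $J$-pair of the first type, hence satisfies~\eqref{eqJpair}. Writing $P=AO'-AE'$ and $Q=AO'+AE'$, Remark~\ref{GenJP} produces two analytic functions $V$ and $\widetilde V$ with
\[
P=V\!\left(\tfrac{1-q}{1-p},\,x_+\right),\qquad Q=\widetilde V\!\left(\tfrac{1-q}{1-p},\,x_-\right).
\]
Since $AO=\tfrac12\sqrt{\tfrac{p-1}{pq}}(P+Q)$ and $AE=\tfrac12\sqrt{\tfrac{p-1}{p}}(Q-P)$, the theorem reduces to identifying $V$ and $\widetilde V$; the change of variables $y=\tfrac{1-q}{1-p}$ will be governed by the identities $1-1/y=\tfrac{p-q}{1-q}$ and $\sqrt{1-y}=\sqrt{\tfrac{p-q}{p-1}}$.

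To determine $V$ and $\widetilde V$ I would specialize to $q=0$, the same device used in Theorem~\ref{lem:aa0}. There the system of Lemma~\ref{lem:aa1} collapses to $AE_x(x,p,0)=p\,AO(x,p,0)$ together with an equation for $AO_x(x,p,0)$ that still carries the term $2p\,AE_q(x,p,0)$. Using the initial data $AO(0,p,q)=0$, $AE(0,p,q)=1$ and the boundary data $AO(x,0,0)=x$, $AE(x,0,0)=1$ read off from the expansion of $A$, I would verify that $AE(x,p,0)=G(x,p)$ and $AO(x,p,0)=H(x,p)$; the relation $AE_x=p\,AO$ is precisely the clean identity $G_x(x,p)=p\,H(x,p)$, immediate from the definitions of $G$ and $H$.

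The delicate step, and the one I expect to be the main obstacle, is passing from the $q=0$ data back to $V$ and $\widetilde V$. Because of the factor $\sqrt q$ one has $AO'(x,p,0)=0$, so the $q=0$ slice only exhibits $P(x,p,0)=-AE'(x,p,0)$ and $Q(x,p,0)=AE'(x,p,0)$, both governed by $G$. Since $k_{p,0}=0$ gives $x_\pm\to(p-1)x$ and $y\to\tfrac1{1-p}$, these values trace $V$ and $\widetilde V$ over a two-dimensional region as $(x,p)$ vary, and analyticity extends the identification. The subtlety is that the odd information carried by $H$ is not destroyed by the vanishing of $AO'$: it re-enters through the expansion $k_{p,q}=\tfrac{1-p}{p}\sqrt q+O(q^{3/2})$ of the shift inside $x_\pm$, and is reconciled with the value determination by $H=p^{-1}G_x$. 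I would make this precise by expanding $Q=\widetilde V(y,x_-)$ to first order in $\sqrt q$ and matching both the constant and the $\sqrt q$ coefficients against $AO'+AE'$.

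With $V$ and $\widetilde V$ in hand I would substitute back into $AO=\tfrac12\sqrt{\tfrac{p-1}{pq}}(P+Q)$ and $AE=\tfrac12\sqrt{\tfrac{p-1}{p}}(Q-P)$, simplify the square-root prefactors via $\sqrt{1-y}=\sqrt{\tfrac{p-q}{p-1}}$, and use that $G$ is even and $H$ is odd in the first slot to reach the claimed formulas, the relation $H=p^{-1}G_x$ being what lets the term attached to $x_-$ be expressed through $H$. As a final safeguard I would corroborate the resulting closed forms against the explicit low-order expansion of $A(x,p,q)$ listed before the statement. The genuine work is concentrated in the $q\to0$ degeneration of the preceding paragraph; the remaining manipulations parallel the passage from~\eqref{eqss1} to~\eqref{eqss0} and are routine.
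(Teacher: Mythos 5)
Your overall strategy is the paper's own: Lemma~\ref{lem:aa1} plus Remark~\ref{GenJP} give $AO'-AE'=V(y,x_+)$ and $AO'+AE'=\widetilde V(y,x_-)$, and the two unknown functions are pinned down on the slice $q=0$. You have also put your finger on the right subtlety: since $AO'=\sqrt{pq/(p-1)}\,AO$ carries a factor $\sqrt q$, the slice $q=0$ sees only $AE'(x,p,0)$, so \emph{both} $V$ and $\widetilde V$ are forced to equal $\mp\sqrt{1-y}\,G(ys,1-1/y)$ --- built from $G$ alone --- and the odd data $H$ is recovered only in the limit $q\to 0$, through $k_{p,q}=\tfrac{1-p}{p}\sqrt q+O(q^{3/2})$ and the identity $G_x(x,p)=p\,H(x,p)$ (exactly the mechanism by which $CO(x,p,0)$ emerges from Theorem~\ref{thCCth} in Corollary~\ref{CO:caseC}).

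The gap is in your last paragraph: that argument cannot deliver the statement as printed. Once $V$ and $\widetilde V$ are determined by the $q=0$ slice, substitution back gives
\begin{align*}
AO&=\tfrac12\sqrt{\tfrac{p-q}{pq}}\bigl(G(yx_-,1-1/y)-G(yx_+,1-1/y)\bigr),\\
AE&=\tfrac12\sqrt{\tfrac{p-q}{p}}\bigl(G(yx_-,1-1/y)+G(yx_+,1-1/y)\bigr),
\end{align*}
with $G$ in \emph{both} slots. The relation $H=p^{-1}G_x$ turns the difference quotient $\bigl(G(yx_-)-G(yx_+)\bigr)/(2\sqrt q)$ into $H$ only as $q\to0$; it does not allow you to rewrite $G(yx_-,\cdot)$ as $H(yx_-,\cdot)$ at finite $q$ ($G$ is even, $H$ odd in the first argument). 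Indeed the printed formulas fail the initial condition: at $x=0$ one finds $H(yx_-,1-1/y)=-\sqrt{pq}/(p-q)^{3/2}$ and $G(yx_+,1-1/y)=\sqrt{p/(p-q)}$, so the theorem's $AE(0,p,q)$ equals $\tfrac12\bigl(1-\sqrt q/(p-q)\bigr)\neq 1$, whereas the $G$-only version gives $1$ and matches the listed expansion of $A(x,p,q)$. So your plan, carried out honestly (including your own proposed low-order check), proves a corrected statement rather than the printed one. For what it is worth, the paper's proof commits the same slip: it derives $-V(p,x)=\widetilde V(p,x)=\sqrt{1-p}\,AE(-px,1-1/p,0)$, which involves only $AE(\cdot,\cdot,0)=G$, and then nonetheless writes $\widetilde V$ in terms of $H$.
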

\begin{proof}
By Remark \ref{GenJP} and Lemma \ref{lem:aa1}, we obtain that
$$\sqrt{\frac{pq}{p-1}}AO(x,p,q)-\sqrt{\frac{p}{p-1}}AE(x,p,q)=V(y,x_+)$$
and
$$\sqrt{\frac{pq}{p-1}}AO(x,p,q)+\sqrt{\frac{p}{p-1}}AE(x,p,q)=\widetilde{V}(y,x_-),$$
for some functions $V$ and $\widetilde{V}$. Moreover, at $q=0$, the above equations reduce to
$$-V(p,x)=\widetilde{V}(p,x)=\sqrt{1-p}AE(-px,1-1/p,0).$$

Hence, if we guess that $AE(x,p,0)=G(x,p)$ and $AO(x,p,0)=H(x,p)$, then we get
\begin{align*}
\sqrt{\frac{pq}{p-1}}AO(x,p,q)-\sqrt{\frac{p}{p-1}}AE(x,p,q)=-\sqrt{1-y}G(yx_+,1-1/y),\\
\sqrt{\frac{pq}{p-1}}AO(x,p,q)+\sqrt{\frac{p}{p-1}}AE(x,p,q)=\sqrt{1-y}H(yx_-,1-1/y),
\end{align*}
which implies
\begin{align*}
AO(x,p,q)=\frac{1}{2}\sqrt{\frac{(1-y)(p-1)}{pq}}(H(yx_-,1-1/y)-G(yx_+,1-1/y)),\\
AE(x,p,q)=\frac{1}{2}\sqrt{\frac{(p-1)(1-y)}{p}}(H(yx_-,1-1/y)+G(yx_+,1-1/y)).
\end{align*}
To complete the proof, we have to check that the functions $AO$ and $AE$ are satisfying Lemma~\ref{lem:aa1}, which is a
routine procedure.
\end{proof}

Along the same lines, we get
\begin{equation}\label{cn-recu}
\begin{split}
c_{2n,i,j}&=(2i+2)c_{2n-1,i,j-1}+(2j+1)c_{2n-1,i-1,j}+(4n-2i-2j+1)c_{2n-1,i-1,j-1},\\
c_{2n+1,i,j}&=(2i+2)c_{2n,i,j}+(2j+2)c_{2n,i-1,j+1}+(4n-2i-2j+2)c_{2n,i-1,j},
\end{split}
\end{equation}
which leads to the following result.

\begin{lemma}\label{lem:cc1} We have
\begin{align}\label{c-PDE}
\left\{\begin{array}{ll}
CO_x&=2CE+2p(1-p)CE_p+2p(1-q)CE_q+2xpCE_x,\\
CE_x&=(p+2q-pq)CO+2pq(1-p)CO_p+2pq(1-q)CO_q+2xpqCO_x.
\end{array}\right.
\end{align}
Equivalently, $(\widetilde{CO},\widetilde{CE})$ is a Jacobi-pair of the first type, where $\widetilde{CO}=\frac{p\sqrt{q}}{p-1}CO$ and $\widetilde{CE}=\frac{p}{p-1}CE$.
\end{lemma}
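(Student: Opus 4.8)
The plan is to mimic exactly the derivation of Lemma~\ref{lem:aa1}, since Lemma~\ref{lem:cc1} has precisely the same structure but for the sequence $c_{n,i,j}$ governed by the recurrences in~\eqref{cn-recu}. First I would translate the two recurrences in~\eqref{cn-recu} into generating-function identities. Recall the splitting $C(x,p,q)=CO(x,p,q)+CE(x,p,q)$ into odd and even parts. The first recurrence in~\eqref{cn-recu} relates $c_{2n,i,j}$ (an even-index coefficient, contributing to $CE$) to $c_{2n-1,\cdot,\cdot}$ (odd-index, contributing to $CO$); the second relates $c_{2n+1,i,j}$ (contributing to $CO$) to $c_{2n,\cdot,\cdot}$ (contributing to $CE$). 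So the recurrences naturally become one PDE expressing $CO_x$ in terms of $CE$ and its derivatives, and another expressing $CE_x$ in terms of $CO$ and its derivatives, exactly the shape of the claimed display~\eqref{c-PDE}.

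The mechanical heart is matching each term of each recurrence to a differential operator acting on the generating function. Each coefficient $c_{m,i,j}$ sits in front of a monomial $x^{2i+1}y^{2j}z^{4n-2i-2j}$ (or the odd-index analogue), so after the substitution that produces $C(x,p,q)=\sum c_{m,i,j}\frac{x^m}{m!}p^iq^j$ one reads off that multiplication by $2i$ corresponds to $2p\partial_p$, multiplication by $2j$ to $2q\partial_q$, and the index shifts $i\mapsto i-1$, $j\mapsto j\pm1$ become multiplication by $p$ or $q^{\pm1}$. The factor $(4n-2i-2j+c)$ is handled by writing $4n = 2m-2$ or $2m$ depending on parity and recognising $m$ as the $x$-degree, which contributes an $x\partial_x$ term. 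I would carry out this bookkeeping termwise for both lines of~\eqref{cn-recu}; the constant terms $2i+2$, $2j+1$, $4n-2i-2j+1$ in the first line and $2i+2$, $2j+2$, $4n-2i-2j+2$ in the second line produce precisely the coefficients $2$, $2p(1-p)$, $2p(1-q)$, $2xp$ in the equation for $CO_x$ and $p+2q-pq$, $2pq(1-p)$, $2pq(1-q)$, $2xpq$ in the equation for $CE_x$. One must be careful with the leading constants: the term $2CE$ (rather than $CE$) and the term $(p+2q-pq)CO$ (rather than $(q+p-pq)CO$) are exactly where Lemma~\ref{lem:cc1} differs from Lemma~\ref{lem:aa1}, and these differences trace back to the $2i+2$ (instead of $2i+1$) in the first slot of both recurrences.

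Once the PDE system~\eqref{c-PDE} is established, the second assertion — that $(CO',CE')$ with $CO'=\frac{p\sqrt q}{p-1}CO$ and $CE'=\frac{p}{p-1}CE$ forms a $J$-pair of the first type — is a direct substitution into~\eqref{eqJpair}. I would set $CO = \frac{p-1}{p\sqrt q}CO'$ and $CE=\frac{p-1}{p}CE'$, plug these into~\eqref{c-PDE}, compute the partial derivatives $CO_p, CO_q, CE_p, CE_q$ by the product rule (noting the $p$- and $q$-dependence of the prefactors), and verify that after clearing the common factor $\frac{p-1}{p}$ the two lines collapse to the symmetric form $F_x = 2p\sqrt q\,[(1-p)G_p+(1-q)G_q+xG_x]$ of~\eqref{eqJpair} with $(F,G)=(CO',CE')$. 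I expect the main obstacle to be purely computational rather than conceptual: tracking the extra factors of $p$, $q$ and $\sqrt q$ introduced by the rescaling and confirming that the inhomogeneous leading terms $2CE$ and $(p+2q-pq)CO$ combine correctly so that the normalisation chosen for $CO'$ and $CE'$ genuinely symmetrises the system. Verifying the normalisation against the first few coefficients of $C(x,p,q)$ listed in the excerpt would serve as a useful sanity check along the way.
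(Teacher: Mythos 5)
Your proposal follows exactly the paper's (largely implicit) argument: the paper derives \eqref{cn-recu} "along the same lines" as \eqref{a-recu-1}--\eqref{a-recu-2} and then translates the recurrences into the PDE system via the same dictionary ($i\mapsto p\partial_p$, $j\mapsto q\partial_q$, degree $\mapsto x\partial_x$, index shifts $\mapsto$ multiplication by $p,q^{\pm1}$), with the $J$-pair claim checked by direct substitution of the rescaled functions into \eqref{eqJpair}. The bookkeeping you describe does produce the stated coefficients (I verified both lines and the normalisation), so the proposal is correct and essentially identical in approach to the paper.
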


\begin{theorem}\label{thCCth}
Define $y=\frac{1-q}{1-p}$ and
$G(x,p)=\frac{1-p}{p\cos^2(x\sqrt{p-1})+1-p}$. Then
\begin{align}\label{eq:solcc}
\begin{array}{ll}
CO(x,p,q)&=\frac{p-1}{2p\sqrt{q}}(G(x_-,y)-G(x_+,y)),\\
CE(x,p,q)&=\frac{p-1}{2p}(G(x_-,y)+G(x_+,y)),\\
C(x,p,q)&=\frac{p-1}{2p\sqrt{q}}(G(x_-,y)-G(x_+,y))+\frac{p-1}{2p}(G(x_-,y)+G(x_+,y)).
\end{array}
\end{align}
\end{theorem}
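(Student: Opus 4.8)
The plan is to imitate the proof of Theorem~\ref{th-aa} line by line. By Lemma~\ref{lem:cc1} the pair $(CO',CE')=\bigl(\frac{p\sqrt q}{p-1}CO,\frac{p}{p-1}CE\bigr)$ is a $J$-pair of the first type, so Remark~\ref{GenJP} guarantees two functions $V,\widetilde V$ with
\begin{align*}
CO'-CE'&=V(y,x_+),\\
CO'+CE'&=\widetilde V(y,x_-).
\end{align*}
Solving this $2\times2$ linear system and undoing the normalisations $CO=\frac{p-1}{p\sqrt q}CO'$, $CE=\frac{p-1}{p}CE'$ already produces expressions of exactly the shape of~\eqref{eq:solcc}; thus the whole theorem reduces to showing that $V(y,x_+)=-G(x_+,y)$ and $\widetilde V(y,x_-)=G(x_-,y)$.

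To identify $V$ and $\widetilde V$ I would specialise to $q=0$, as was done in Theorem~\ref{lem:aa0}. There $k_{p,0}=0$, so $x_\pm=(p-1)x$ and $y=\frac1{1-p}$, while the prefactor $\sqrt q$ kills $CO'$; the two equations then collapse to
$$V\!\Bigl(\tfrac1{1-p},(p-1)x\Bigr)=-\tfrac{p}{p-1}CE(x,p,0),\qquad \widetilde V\!\Bigl(\tfrac1{1-p},(p-1)x\Bigr)=\tfrac{p}{p-1}CE(x,p,0).$$
Hence $V$ and $\widetilde V$ are pinned down by the single boundary series $CE(x,p,0)$. Writing $s=\frac1{1-p}$ and $t=(p-1)x$ (so $p=\frac{s-1}{s}$, $x=-st$) rewrites these right-hand sides as honest functions of the arguments $(s,t)$, and comparison with the definition of $G$ gives $V(s,t)=-G(t,s)$ and $\widetilde V(s,t)=G(t,s)$, which is precisely the desired identification.

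The crux is therefore the evaluation of $CE(x,p,0)$, and this is the step I expect to be the main obstacle. Setting $q=0$ in~\eqref{c-PDE} leaves $CE_x=pCO$ together with $CO_x=2CE+2p(1-p)CE_p+2pCE_q+2xpCE_x$; the term $CE_q$ does \emph{not} drop out, so the $q=0$ slice is not closed (equivalently, in~\eqref{cn-recu} the coefficient $c_{2n+1,i,0}$ still calls on the $j=1$ data), and one cannot merely integrate an ordinary differential equation. I would instead guess, guided by the low-order expansion of $C(x,p,q)$ and by the parallel with Theorem~\ref{th-aa}, that
$$CE(x,p,0)=\frac{p-1}{p-\cos^2\!\bigl(x\sqrt{p(1-p)}\bigr)}=\frac{p-1}{p}\,G\!\Bigl((p-1)x,\tfrac1{1-p}\Bigr),$$
whose Taylor expansion $1+px^2+\cdots$ matches the listed coefficients of $C$. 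Substituting this guess into the previous step fixes $V=-G$ and $\widetilde V=G$, whence solving for $CO'$, $CE'$ and forming $CO$, $CE$ and $C=CO+CE$ yields the three formulas of~\eqref{eq:solcc}. The proof then closes, exactly as in Theorem~\ref{th-aa}, with the routine but laborious check that these explicit functions satisfy the PDE system~\eqref{c-PDE}; it is this final verification that promotes the guess to a theorem.
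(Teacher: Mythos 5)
Your proposal is correct and follows essentially the same route as the paper's own proof: invoke Lemma~\ref{lem:cc1} and Remark~\ref{GenJP} to reduce everything to two unknown functions $V,\widetilde V$, pin them down via the $q=0$ specialization by positing the boundary value $CE(x,p,0)=\frac{p-1}{p-\cos^2(x\sqrt{p(1-p)})}$ (equivalently the paper's $(1-p)CE(-px,1-1/p,0)=G(x,p)$), and close with the routine verification that the resulting $CO$, $CE$ satisfy the PDE system. Your remark that the $q=0$ slice is not a closed ODE, so the boundary function must be guessed and then validated by the final check, is an accurate reading of the same (tacit) step in the paper.
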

\begin{proof}
By Remark \ref{GenJP} and Lemma \ref{lem:cc1}, we obtain that
$$\frac{p\sqrt{q}}{p-1}CO(x,p,q)-\frac{p}{p-1}CE(x,p,q)=\widetilde{V}(y,x_+)$$
and
$$\frac{p\sqrt{q}}{p-1}CO(x,p,q)+\frac{p}{p-1}CE(x,p,q)=V(y,x_-)$$
for some functions $V$ and $\widetilde{V}$. Moreover, at $q=0$, then above equations reduce to
$$V(1/(1-p),(p-1)x)=-\widetilde{V}(1/(1-p),(p-1)x).$$

Hence, if we take $(1-p)CE(-px,1-1/p,0)=G(x,p)$, $CO(x,p,q)=\frac{p-1}{2p\sqrt{q}}(G(x_-,y)-G(x_+,y))$ and $CE(x,p,q)=\frac{p-1}{2p}(G(x_-,y)+G(x_+,y))$, then \eqref{eq:solcc} is a solution for~\eqref{c-PDE}, where
$$V(1/(1-p),(p-1)x)=-\widetilde{V}(1/(1-p),(p-1)x)=(1-p)CE(-px,1-1/p,0)=G(x,p).$$
To complete the proof, we have to check that the functions $CO$ and $CE$ are satisfying Lemma~\ref{lem:cc1}, which is a
routine procedure.
\end{proof}

\begin{corollary}\label{CO:caseC}
We have
\begin{align*}
C(x,0,q)&=\cosh(2\sqrt{q}x)+\frac{1}{\sqrt{q}}\sinh(2\sqrt{q}x),\\
C(x,1,q)&=\frac{(x^2(q-1)+2x+1)}{(x^2(1-q)-2x+1)(x^2(1-q)+2x+1)},\\
C(x,p,0)&=\frac{(1-p)\sqrt{1-p}\sin(2x\sqrt{p(1-p)})}{\sqrt{p}(\cos^2(x\sqrt{p(1-p)})-p)^2}+\frac{1-p}{\cos^2(x\sqrt{p(1-p)})-p},\\
C(x,p,1)&=\frac{p-1}{p-e^{2x(p-1)}}.
\end{align*}
\end{corollary}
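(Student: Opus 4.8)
The plan is to obtain each of the four special cases by substituting the relevant limiting value of the parameters into the general formula for $C(x,p,q)$ established in Theorem~\ref{thCCth}, namely
\[
C(x,p,q)=\frac{p-1}{2p\sqrt{q}}\bigl(G(x_-,y)-G(x_+,y)\bigr)+\frac{p-1}{2p}\bigl(G(x_-,y)+G(x_+,y)\bigr),
\]
with $y=\tfrac{1-q}{1-p}$, $x_\pm=(p-1)x\pm k_{p,q}$, and $G(x,p)=\tfrac{1-p}{p\cos^2(x\sqrt{p-1})+1-p}$. Each of the four values $p=0$, $p=1$, $q=0$, $q=1$ is a boundary point where the formula is not literally defined (because of the $\sqrt{q}$, $(p-1)$, or $\sqrt{p-1}$ factors), so I would treat each case as a careful limit rather than a blind substitution. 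The first step in every case is to evaluate $y$, $k_{p,q}$, and hence $x_\pm$ at the limiting parameters, then expand $G$ at the resulting arguments. Because $G$ involves $\cos^2(x\sqrt{p-1})$, whenever $p<1$ the square root $\sqrt{p-1}$ is imaginary and I would rewrite $\cos$ in terms of $\cosh$ (using $\cos(i\theta)=\cosh\theta$) to keep everything real; this is the bookkeeping that turns trigonometric formulas into the hyperbolic/rational expressions on the right-hand side.

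Concretely, for $C(x,0,q)$ I would set $p=0$, giving $y=1-q$ and needing the $p\to 0$ limit of $k_{p,q}$; here the $\tfrac{p-1}{2p\sqrt q}$ prefactor blows up like $1/p$, so the content of the computation is that $G(x_-,y)-G(x_+,y)$ vanishes to first order in $p$, and the $O(p)$ coefficient combined with $1/p$ produces the $\tfrac{1}{\sqrt q}\sinh(2\sqrt q x)$ term while the symmetric part gives $\cosh(2\sqrt q x)$. For $C(x,1,q)$ I would take $p\to 1$, where $y\to 0$ and $x\sqrt{p-1}\to 0$, so $G$ degenerates and one expands $\cos^2(x\sqrt{p-1})\approx 1-(p-1)x^2$ to convert $G$ into a rational function of $x$; the quoted rational answer with denominator $(x^2(1-q)-2x+1)(x^2(1-q)+2x+1)$ should emerge after combining the two summands over a common denominator. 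For $C(x,p,0)$ I set $q=0$, so $y=\tfrac{1}{1-p}$ and the formula should collapse to $C(x,p,0)=CO(x,p,0)+CE(x,p,0)$; by the identifications made in the proof of Theorem~\ref{thCCth} this equals $H$-type plus $G$-type terms, and indeed $(1-p)CE(-px,1-1/p,0)=G(x,p)$ ties it directly to the stated $\tfrac{1-p}{\cos^2(x\sqrt{p(1-p)})-p}$ expression together with its $x$-derivative-like companion (the $\sin(2x\sqrt{p(1-p)})$ term). Finally $C(x,p,1)$ comes from $q\to 1$, where $y\to 0$ again but now the odd part carries a $1/\sqrt q\to 1$ factor, and $k_{p,q}$ must be evaluated at $q=1$; reducing $G$ at these arguments should give the single exponential form $\tfrac{p-1}{p-e^{2x(p-1)}}$.

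The main obstacle I anticipate is the $C(x,0,q)$ case, and more generally correctly handling the indeterminate forms: at each boundary a factor in the prefactor ($\sqrt q$ in the denominator at $q=0$, or $p$ in the denominator at $p=0$, or $p-1$ at $p=1$) is singular exactly where the bracketed difference or the argument shift $k_{p,q}$ degenerates, so the finite answer is a $0/0$ or $\infty\cdot 0$ limit. The safe route is to compute the limit via a first-order Taylor expansion in the offending parameter rather than naive substitution, keeping track of how $h_{p,q}$-type and $k_{p,q}$-type arguments behave; for instance at $p=0$ one needs the expansion $k_{p,q}=\sqrt{\tfrac{p-1}{q-p}}\arctan\!\bigl(\sqrt{\tfrac{q(p-1)}{q-p}}\bigr)$ to the relevant order. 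Once these limits are set up correctly, each verification reduces to an elementary but slightly tedious trigonometric-to-hyperbolic simplification, which I would present as a routine check (and which can be confirmed symbolically). An alternative, possibly cleaner strategy for $C(x,1,q)$ and $C(x,p,1)$ is to bypass the general formula and instead solve the specialized system~\eqref{c-PDE} directly after setting the parameter, since at these values the PDE system collapses to a far simpler ODE whose solution can be matched against the stated initial conditions; I would use whichever of the two approaches produces the cleaner algebra for each individual case.
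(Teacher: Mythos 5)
Your overall strategy is sound, but it diverges from the paper's in a way worth noting, and in the one place where it diverges most it is also where your plan is weakest. The paper handles the four cases asymmetrically: $C(x,p,0)$ and $C(x,1,q)$ are obtained by specializing Theorem~\ref{thCCth} (as you propose), while $C(x,0,q)$ and $C(x,p,1)$ are obtained by \emph{solving the system~\eqref{c-PDE} directly at the boundary parameter}. At $p=0$ the system collapses to the constant-coefficient pair $CO_x=2CE$, $CE_x=2qCO$, which with $CE(0,p,q)=1$, $CO(0,p,q)=0$ immediately gives $\cosh(2\sqrt qx)+\tfrac{1}{\sqrt q}\sinh(2\sqrt qx)$; at $q=1$ it reduces to a first-order transport-type system whose general solution involves two functions of $x(p-1)\pm\tfrac12\ln p$, pinned down by the same initial values. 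You do mention this PDE-collapse alternative, but only for $p=1$ and $q=1$ --- not for $p=0$, which is precisely the case the paper routes around.

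The concrete difficulty with your primary plan for $C(x,0,q)$ is that it is not a first-order Taylor expansion in $p$. The shift $k_{p,q}=\sqrt{\tfrac{p-1}{q-p}}\arctan\bigl(\sqrt{\tfrac{q(p-1)}{q-p}}\bigr)$ does not have a finite limit as $p\to0$: the arctangent argument tends to $i$, a branch point, and $k_{p,q}\sim\tfrac{1}{2\sqrt q}\log(cp)$ diverges logarithmically. The finite answer emerges only because this logarithm sits inside $\cosh^2(x_\pm\sqrt q)$, turning $\cosh^2$ into something of order $1/p$ and hence $G(x_\pm,y)$ into something of order $p$, which then cancels the $1/p$ in the prefactor; extracting the coefficient requires large-argument asymptotics of $\cosh$ and a branch choice for the logarithm, not a Taylor expansion ``to the relevant order.'' So your predicted outcome is right but the advertised mechanism would fail as stated. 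The fix is already in your own last sentence: apply the direct-PDE shortcut to $p=0$ as well (where it is trivial), and reserve the substitution into Theorem~\ref{thCCth} for $q=0$ and $p=1$, where $k_{p,q}\to0$ and the indeterminacies are genuinely resolvable by first-order expansion. One small slip besides: the function $H$ you invoke for the $C(x,p,0)$ case belongs to Theorem~\ref{th-aa} (the $A$-sequence), not to Theorem~\ref{thCCth}; the $\sin(2x\sqrt{p(1-p)})$ term in $C(x,p,0)$ arises instead as the $q\to0$ difference quotient $\tfrac{1}{\sqrt q}\bigl(G(x_-,y)-G(x_+,y)\bigr)\to -2\,\partial_uG\cdot\lim_{q\to0}k_{p,q}/\sqrt q$.
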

\begin{proof}
By applying Theorem \ref{thCCth} for $q=0$ or $p=1$, we obtain the formulas of $C(x,p,0)$ and $C(x,1,q)$.
Solving \eqref{c-PDE} for $p=0$, we obtain
\begin{align*}
CE(x,0,q)&=\alpha_q e^{2\sqrt{q}x}+\beta_q e^{-2\sqrt{q}x},\\
CO(x,0,q)&=\frac{1}{\sqrt{q}}(\alpha_q e^{2\sqrt{q}x}-\beta_q e^{-2\sqrt{q}x}).
\end{align*}
By using the initial conditions $CE(0,p,q)=1$ and $CO(0,p,q)=0$, we obtain $CE(x,0,p)=\cosh(2\sqrt{q}x)$ and $CO(x,0,q)=\frac{1}{\sqrt{q}}\sinh(2\sqrt{q}x)$, which completes the first part of the proof.

Again, solving \eqref{c-PDE} with $q=1$ for $CO(x,p,1)-CE(x,p,1)$ and $CO(x,p,1)+CE(x,p,1)$, we obtain
\begin{align*}
CO(x,p,1)-CE(x,p,1)&=\frac{p-1}{p}V(x(p-1)+\frac{1}{2}\ln p),\\
CO(x,p,1)+CE(x,p,1)&=\frac{p-1}{p}\widetilde{V}(x(p-1)-\frac{1}{2}\ln p),
\end{align*}
where $V,\widetilde{V}$ are two fixed functions. By the initial values $CE(0,p,q)=1$ and $CO(0,p,q)=0$, we get
$$V(y)=\frac{e^{2y}}{1-e^{2y}}\mbox{ and }\widetilde{V}(y)=\frac{1}{1-e^{2y}}.$$
Hence,
\begin{align*}
CO(x,p,1)-CE(x,p,1)&=\frac{(p-1)e^{2x(p-1)}}{1-pe^{2x(p-1)}},\\
CO(x,p,1)+CE(x,p,1)&=\frac{p-1}{p-e^{2x(p-1)}},
\end{align*}
which completes the proof.
\end{proof}

Along the same lines, we get
\begin{equation}\label{dn-recu}
\begin{split}
d_{2n,i,j}&=(2i+1)d_{2n-1,i,j}+(2j+2)d_{2n-1,i-1,j+1}+(4n-2i-2j+1)d_{2n-1,i-1,j},\\
d_{2n+1,i,j}&=(2i+1)d_{2n,i,j-1}+(2j+1)d_{2n,i-1,j}+(4n-2i-2j+4)d_{2n,i-1,j-1},
\end{split}
\end{equation}
which leads to the following result.

\begin{lemma}\label{lem:dd1}
We have
\begin{align}\label{d-PDE}
\left\{\begin{array}{ll}
DO_x&=(p+q)DE+2pq(1-p)DE_p+2pq(1-q)DE_q+2pqxDE_x,\\
DE_x&=(1+p)DO+2p(1-p)DO_p+2p(1-q)DO_q+2pxDO_x.
\end{array}\right.
\end{align}
Equivalently, $(\widetilde{DO},\widetilde{DE})$ is a Jacobi-pair of the first type, where $\widetilde{DO}=\sqrt{\frac{p}{p-1}}DO$ and $\widetilde{DE}=\sqrt{\frac{pq}{p-1}}DE$.
\end{lemma}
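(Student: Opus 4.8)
The plan is to prove Lemma~\ref{lem:dd1} in two stages: first derive the system~\eqref{d-PDE} from the recurrences~\eqref{dn-recu}, exactly as was done for $a_{n,i,j}$ and $c_{n,i,j}$, and then verify that the rescaled pair $(DO',DE')$ solves the defining system~\eqref{eqJpair} of a $J$-pair of the first type. The recurrences~\eqref{dn-recu} themselves come from applying $Dx$ to the expansions of $(Dx)^{2n}(y)$ and $(Dx)^{2n+1}(y)$ and invoking~\eqref{diff-elliptic}: differentiating the factors $x^{2i}$, the odd/even power of $y$, and the power of $z$ produces three families of monomials, and collecting the coefficient of a fixed monomial yields the two displayed relations. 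This is the step meant by ``along the same lines'' and is routine.

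To pass from~\eqref{dn-recu} to~\eqref{d-PDE} I would use the standard dictionary between index operations on $d_{n,i,j}$ and differential operators on the generating functions: a shift $i\mapsto i-1$ (resp.\ $j\mapsto j-1$) corresponds to multiplication by $p$ (resp.\ $q$); a weight $2i+1$ acts as $2p\,\partial_p+1$ and a weight $2j+1$ as $2q\,\partial_q+1$; and a total-degree weight of the shape $4n-2i-2j+c$ becomes a combination of $2x\,\partial_x$, $-2p\,\partial_p$, $-2q\,\partial_q$ and a constant. Substituting the second line of~\eqref{dn-recu} into $DO_x$ then yields the first line of~\eqref{d-PDE}, and substituting the first line into $DE_x$ yields the second. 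The only care required is the bookkeeping of the reindexing $n\mapsto n-1$ and of the additive constants it introduces, together with the (harmless) vanishing of boundary terms where a shifted index becomes negative or a weight vanishes.

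For the ``equivalently'' assertion I would write the two scaling factors as $f=f(p)$ for $DO'=f\,DO$ and $\sqrt{q}\,f$ for $DE'=\sqrt{q}\,f\,DE$, substitute into~\eqref{eqJpair}, and compare the coefficients of $DE,DE_p,DE_q,DE_x$ in the first equation and of $DO,DO_p,DO_q,DO_x$ in the second. The three highest-order comparisons match automatically: the factor $\sqrt q$ separating the two scalings is exactly what turns the coefficients $2pq(1-p),2pq(1-q),2pqx$ of~\eqref{d-PDE} into the coefficients $2p\sqrt q(1-p),2p\sqrt q(1-q),2p\sqrt q x$ of~\eqref{eqJpair}.

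The single delicate comparison---and the step I expect to be the main obstacle---is the zeroth-order term. Matching the coefficient of $DE$ forces $2pq(1-p)f'+p(1-q)f=(p+q)f$, that is $f'/f=(1+p)/\bigl(2p(1-p)\bigr)$, and the coefficient of $DO$ in the second equation yields the same condition; in the general situation where $f$ may also depend on $q$ this is a first-order linear PDE solved along the characteristic invariant $y=(1-q)/(1-p)$. This is the only place where the precise form of the scaling factor enters, so I would integrate this equation to pin down the exact power of $(p-1)$---obtaining $f\sim\sqrt{p}\,(1-p)^{-1}$---and then carefully reconcile the result with the factor $\sqrt{p/(p-1)}$ recorded in the statement before concluding that $(DO',DE')$ is a $J$-pair.
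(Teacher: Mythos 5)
Your two-stage plan is exactly the paper's (implicit) argument: the paper offers no proof of this lemma beyond ``along the same lines,'' meaning precisely the derivation of~\eqref{dn-recu} from the monomial expansions of $(Dx)^n(y)$, the standard translation into the PDE system~\eqref{d-PDE} for $DE$ and $DO$, and a coefficient comparison against~\eqref{eqJpair}. Your first two stages are correct, and your identification of the zeroth-order term as the only nontrivial comparison is also correct.

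However, the ``reconciliation'' you defer to the end cannot be carried out, and you should not expect it to: your computation is right and the statement's scaling factor is not. Matching the coefficient of $DE$ (equivalently of $DO$ in the second equation) forces, as you found,
\[
\frac{f'}{f}=\frac{1+p}{2p(1-p)}=\frac{1}{2p}+\frac{1}{1-p},
\qquad\text{hence}\qquad f=C\,\frac{\sqrt{p}}{1-p}.
\]
The factor recorded in the lemma is $\sqrt{p/(p-1)}$, and the ratio of the two is $1/\sqrt{p-1}$, which is not constant; so with the stated $DO'$, $DE'$ the pair satisfies~\eqref{eqJpair} only up to an extraneous term $p\,DE$ (resp.\ $p\,DO$). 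The correct normalization is $DO'=\frac{\sqrt{p}}{p-1}\,DO$ and $DE'=\frac{\sqrt{pq}}{p-1}\,DE$. This fits the pattern of the neighbouring lemmas: in Lemma~\ref{lem:aa1} the zeroth-order coefficient $1$ yields $f'/f=\frac{1}{2p(1-p)}$ and the factor $\sqrt{p/(p-1)}$; in Lemma~\ref{lem:cc1} the coefficient $2$ yields $f'/f=\frac{1}{p(1-p)}$ and the factor $\frac{p}{p-1}$; here the coefficient $1+p$ is genuinely different and yields $\frac{\sqrt{p}}{p-1}$. So finish your argument by integrating the ODE as you propose and then \emph{correct} the statement rather than reconcile with it; with that amendment your proof closes, and the remaining verifications (the three first-order coefficients, and the symmetric check in the second equation) are the routine matches you already describe.
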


By similar arguments as in the proof of Theorem \ref{thCCth} with help from Remark \ref{GenJP} and Lemma \ref{lem:dd1}, we obtain the following result.
\begin{theorem}\label{thDDth}
Define $y=\frac{1-q}{1-p}$ and
$G(x,p)=\frac{\sinh(x\sqrt{p-1})}{1-\frac{p}{p-1}\cosh^2(x\sqrt{p-1})}$. Then
\begin{align*}
DO(x,p,q)&=\frac{\sqrt{p-1}}{2\sqrt{p}}(G(x_-,y)+G(x_+,y)),\\
DE(x,p,q)&=\frac{\sqrt{p-1}}{2\sqrt{pq}}(G(x_-,y)-G(x_+,y)),\\
D(x,p,q)&=\frac{\sqrt{p-1}}{2\sqrt{pq}}(G(x_-,y)-G(x_+,y))+\frac{\sqrt{p-1}}{2\sqrt{p}}(G(x_-,y)+G(x_+,y)).
\end{align*}
\end{theorem}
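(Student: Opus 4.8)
The plan is to follow the proof of Theorem~\ref{thCCth} almost verbatim, replacing Lemma~\ref{lem:cc1} by Lemma~\ref{lem:dd1}; the only structural change is that here it is the even part that carries the factor $\sqrt q$. First I would invoke Lemma~\ref{lem:dd1} to regard $(DO',DE')=\bigl(\sqrt{p/(p-1)}\,DO,\ \sqrt{pq/(p-1)}\,DE\bigr)$ as a $J$-pair of the first type, so that it satisfies \eqref{eqJpair}, and then apply Remark~\ref{GenJP} to $P=DO'-DE'$ and $Q=DO'+DE'$. This yields functions $V$ and $\widetilde V$ with $\sqrt{p/(p-1)}\,DO-\sqrt{pq/(p-1)}\,DE=V(y,x_+)$ and $\sqrt{p/(p-1)}\,DO+\sqrt{pq/(p-1)}\,DE=\widetilde V(y,x_-)$.

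Next I would specialize to $q=0$. Then $k_{p,q}=0$, so $x_\pm=(p-1)x$ and $y=1/(1-p)$, while $DE'$ vanishes because of its factor $\sqrt q$. Both equations above therefore collapse to $\sqrt{p/(p-1)}\,DO(x,p,0)=V(1/(1-p),(p-1)x)=\widetilde V(1/(1-p),(p-1)x)$, so $V$ and $\widetilde V$ agree along this slice. (This is exactly where the present case diverges from Theorem~\ref{thCCth}: there the part that vanishes at $q=0$ is the odd one $CO'$, forcing $V=-\widetilde V$, whereas here the even part $DE'$ vanishes and the slice gives $V=\widetilde V$; this sign change is precisely what sends the symmetric combination of $G$-values to the odd part $DO'$ and the antisymmetric one to the even part $DE'$.) To make the boundary data explicit I would solve \eqref{d-PDE} at $q=0$ under the initial conditions $DO(0,p,q)=0$, $DE(0,p,q)=1$, obtaining $DO(x,p,0)$ in closed form; after the change of variables $p\mapsto 1-1/p$, $x\mapsto -px$ one checks that $\sqrt{1-p}\,DO(-px,1-1/p,0)$ equals the function $G$ in the statement. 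Since the slice $q=0$ ranges over a full two-parameter family of arguments, this identifies $V(y,w)=\widetilde V(y,w)=G(w,y)$.

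Substituting back gives $DO'=\tfrac12\bigl(G(x_+,y)+G(x_-,y)\bigr)$ and $DE'=\tfrac12\bigl(G(x_-,y)-G(x_+,y)\bigr)$; dividing by the prefactors $\sqrt{p/(p-1)}$ and $\sqrt{pq/(p-1)}$ recovers the claimed formulas for $DO$ and $DE$, and $D=DO+DE$ produces the last line. A useful consistency check here is that $G$ is odd in its first argument (the numerator $\sinh$ is odd and the denominator even), whence $DO(0,p,q)=0$ holds automatically, while imposing $DE(0,p,q)=1$ is what forces the particular value of $k_{p,q}$ recorded in Section~\ref{Section-2}.

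The hard part will be the final verification that the candidate pair satisfies \eqref{d-PDE}, which is genuinely laborious even though routine in principle: it requires differentiating the composites $G(x_\pm,y)$ through $x_\pm=(p-1)x\pm k_{p,q}$ and $y=(1-q)/(1-p)$, so the chain rule drags in the $\arctan$ and the nested radicals defining $k_{p,q}$, and one must keep the branches of $\sqrt{p-1}$, $\sqrt{pq}$ and $\sqrt{p/(p-1)}$ mutually consistent for the identities to hold. A secondary subtlety is the boundary step itself: the system obtained from \eqref{d-PDE} at $q=0$ is not self-contained, since it still involves $DO_q$, so the closed form of the boundary data is best pinned down by guessing and verifying, exactly as in the proof of Theorem~\ref{thCCth}.
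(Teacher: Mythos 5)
Your proposal is correct and follows essentially the same route as the paper, whose proof of this theorem is literally "by similar arguments as in the proof of Theorem~\ref{thCCth} with help of Remark~\ref{GenJP} and Lemma~\ref{lem:dd1}": reduce to a $J$-pair of the first type, determine $V$ and $\widetilde V$ from the $q=0$ slice, and verify the resulting candidate against the PDE system. Your observation that the vanishing of $DE'$ (rather than the odd part) at $q=0$ forces $V=\widetilde V$ instead of $V=-\widetilde V$, which routes the symmetric combination of $G$-values to $DO$, is exactly the one structural point that distinguishes this case from Theorem~\ref{thCCth}.
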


\begin{corollary}\label{CO:caseD}
Let $\widetilde{p}=\sqrt{p(p-1)}$. Then we have
\begin{align*}
D(x,p,0)&=\frac{(p-1)\cosh(x\widetilde{p})(\cosh^2(x\widetilde{p})-2+p)}{((p-1)\cosh^2(x\widetilde{p})-p\sinh^2(x\widetilde{p}))^2}
+\frac{\widetilde{p}\sinh(x\widetilde{p})}{p-\cosh^2(x\widetilde{p})},\\
D(x,1,q)&=\frac{(x^2(q-1)+2x-1)(x^2(1-q)+2x+1)(x^3(q-1)^2+x^2(q-1)-x(q+1)-1)}{(x^2(q-1)-2x\sqrt{q}+1)^2(x^2(q-1)+2x\sqrt{q}+1)^2},\\
D(x,p,1)&=\frac{(1 - p)e^{(1 - p)x}}{1-pe^{2(1 - p)x}}.
\end{align*}
\end{corollary}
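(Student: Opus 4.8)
The plan is to specialize Theorem~\ref{thDDth} and Lemma~\ref{lem:dd1}, following the two-part pattern of the proof of Corollary~\ref{CO:caseC}. Throughout I use the initial conditions $D(0,p,q)=DE(0,p,q)=1$ and $DO(0,p,q)=0$, which come from $(Dx)^0(y)=y$. The three formulas correspond to the specializations $q=1$, $q=0$ and $p=1$; I would obtain the last two from Theorem~\ref{thDDth} by a limiting argument and the first by solving the system~\eqref{d-PDE} directly.

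For $D(x,p,1)$ I set $q=1$ in~\eqref{d-PDE}. Because every $q$-derivative carries a factor $(1-q)$, the system collapses to a pair of PDEs in $(x,p)$ only, and adding the two equations gives a single first-order linear PDE for $Q:=DO+DE=D(\cdot,\cdot,1)$, namely $(1-2px)Q_x-2p(1-p)Q_p=(p+1)Q$. I would integrate this by characteristics: the curve $\zeta=(1-p)x+\tfrac12\ln p$ is a first integral, and along it $\mathrm{d}Q/Q=\frac{p+1}{-2p(1-p)}\,\mathrm{d}p$ integrates, via the partial fraction $\frac{p+1}{p(1-p)}=\frac1p+\frac2{1-p}$, to the multiplier $\frac{1-p}{\sqrt p}$. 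Hence $Q=\frac{1-p}{\sqrt p}\Phi(\zeta)$ for an unknown $\Phi$, and $Q(0,p,1)=1$ forces $\Phi(w)=\frac{e^{w}}{1-e^{2w}}$; substituting $e^{\zeta}=\sqrt p\,e^{(1-p)x}$ yields the stated formula. This step is clean and I do not expect trouble.

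For $D(x,p,0)$ I specialize Theorem~\ref{thDDth} at $q=0$. Since $k_{p,0}=0$ we have $x_+=x_-=(p-1)x$, so the \emph{sum} $G(x_-,y)+G(x_+,y)$ survives and gives $DO(x,p,0)=\frac{\sqrt{p-1}}{\sqrt p}\,G\bigl((p-1)x,\tfrac1{1-p}\bigr)$, while the \emph{difference} defining $DE$ is a $0/0$ indeterminacy. I would resolve the latter by a first-order expansion: from $\arctan t=t+O(t^3)$ one gets $k_{p,q}=\frac{(1-p)\sqrt q}{p}+O(q^{3/2})$ as $q\to0$, so $\frac1{2\sqrt q}\bigl(G(x_-,y)-G(x_+,y)\bigr)\to-\frac{1-p}{p}\,\partial_1G\bigl((p-1)x,\tfrac1{1-p}\bigr)$. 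Adding $DO$ and $DE$ and simplifying with $p'=\sqrt{p(p-1)}$ gives the claimed expression. Note that the odd/even roles are interchanged relative to Corollary~\ref{CO:caseC}: here it is the sum, not the difference, that is regular.

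The remaining case $D(x,1,q)$, obtained by letting $p\to1$ in Theorem~\ref{thDDth}, is where I expect the real work. Here $y=\frac{1-q}{1-p}\to\infty$ while the prefactors $\frac{\sqrt{p-1}}{2\sqrt p}$ and $\frac{\sqrt{p-1}}{2\sqrt{pq}}$ vanish like $\sqrt{1-p}$, so the limit is a genuine $0\cdot\infty$ cancellation. Writing $\epsilon=1-p$, one has $k_{p,q}=\frac{\epsilon\sqrt q}{1-q}+O(\epsilon^2)$, $x_\pm=O(\epsilon)$, and $\sqrt{y-1}\sim\sqrt{(1-q)/\epsilon}$, so the arguments $x_\pm\sqrt{y-1}$ tend to $0$, the hyperbolic functions in $G$ linearize, and the $\sqrt\epsilon$ from $G$ cancels the $\sqrt\epsilon$ from the prefactor, leaving a finite limit rational in $x$ and $\sqrt q$. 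The obstacle is twofold: keeping the branches consistent (the factors $\sqrt{p-1}$ and $\sqrt{y-1}$ are imaginary on opposite sides of the relevant ranges, so one must track signs carefully, as the paper does elsewhere by working formally with maple), and carrying the expansion to enough order to produce the \emph{squared} denominators $\bigl(x^2(q-1)\pm2x\sqrt q+1\bigr)^2$. As a safeguard against these branch subtleties, each of the three closed forms can instead be confirmed directly by substituting it into the corresponding specialization of Lemma~\ref{lem:dd1} together with $D(0,p,q)=1$, which is routine; I would include this verification to close the argument.
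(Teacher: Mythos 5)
Your proposal is correct and matches the paper's (implicit) argument: the corollary is stated without proof, the reader being expected to repeat the proof of Corollary~\ref{CO:caseC}, i.e.\ specialize Theorem~\ref{thDDth} at $q=0$ and $p=1$ and solve \eqref{d-PDE} directly by characteristics at $q=1$, which is exactly what you do (and your characteristic computation and first integral $(1-p)x+\tfrac12\ln p$ check out). Your explicit handling of the $0/0$ limit at $q=0$, the $0\cdot\infty$ limit at $p=1$, and the fallback of verifying the closed forms against Lemma~\ref{lem:dd1} go beyond what the paper records, but follow the same route.
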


From Corollary~\ref{CO:caseC} and Corollary~\ref{CO:caseD}, it is easy to verify that
\begin{align*}
C(x,1,q)&=\sum_{n\geq0}\sum_{k\geq 0}\binom{2n+1}{2k}q^kx^{2n}+\sum_{n\geq1}\sum_{k\geq 0}\binom{2n}{2k+1}q^kx^{2n-1},\\
D(x,1,q)&=\sum_{n\geq0}\sum_{k\geq 0}\binom{2n+1}{2k+1}q^kx^{2n}+\sum_{n\geq1}\sum_{k\geq 0}\binom{2n}{2k}q^kx^{2n-1}.
\end{align*}
\subsection{Jacobi-pairs of the second type}
\hspace*{\parindent}

In~\cite{Dumont96}, Dumont considered chains of general substitution rules on words. In particular, Dumont discovered the following.
\begin{proposition}
If
\begin{equation}\label{grammar-Dumont}
G=\{w\rightarrow wx, x\rightarrow wx\},
\end{equation}
then
\begin{equation*}
D^n(w)=\sum_{k=0}^{n-1}\Eulerian{n}{k}w^{k+1}x^{n-k},
\end{equation*}
where $\Eulerian{n}{k}$ is the {\it Eulerian number}, i.e., the number of permutations in $\msn$ with $k$ descents.
\end{proposition}
As a conjunction of~\eqref{grammar-Schett} and~\eqref{grammar-Dumont}, it is natural to consider the context-free grammar
\begin{equation}\label{Extension-grammar-Schett}
G=\{w\rightarrow wx, x\rightarrow yz, y\rightarrow xz,z\rightarrow xy\}.
\end{equation}

From~\eqref{Extension-grammar-Schett}, we have
\begin{equation*}
\begin{split}
D(w)&=wx,~D^2(w)=w(x^2+yz),~D^3(x)=w(x^3+xz^2+3xyz+xy^2),\\
D^4(w)&=w(x^4+10x^2yz+4x^2z^2+4x^2y^2+3y^2z^2+y^3z+yz^3),\\
D(w^2)&=2w^2x,~D^2(w^2)=w^2(4x^2+2yz),~D^3(w^2)=w^2(8x^3+12xyz+2xz^2+2xy^2).
\end{split}
\end{equation*}

For $n\geq 0$, we define the numbers $t_{n,i,j}$ and $r_{n,i,j}$ by
\begin{align*}
D^{2n}(w)&=w\sum_{i,j\geq 0}t_{2n,i,j}x^{2i}y^{j}z^{2n-2i-j},\\
D^{2n+1}(w)&=w\sum_{i,j\geq 0}t_{2n+1,i,j}x^{2i+1}y^{j}z^{2n-2i-j},\\
D^{2n}(w^2)&=w^2\sum_{i,j\geq 0}r_{2n,i,j}x^{2i}y^{j}z^{2n-2i-j},\\
D^{2n+1}(w^2)&=w^2\sum_{i,j\geq 0}r_{2n+1,i,j}x^{2i+1}y^{j}z^{2n-2i-j}.
\end{align*}

The first terms of the corresponding generating functions are given as follows:
\begin{align*}
T(x,p,q)&=1+x+(p+q)\frac{x^2}{2!}+(1+p+3q+q^2)\frac{x^3}{3!}\\
&+(p^2+4p+(10p+1)q+(4p+3)q^2+q^3)\frac{x^4}{4!}\\
&+(p^2+14p+1+(30p+15)q+(14p+29)q^2+15q^3+q^4)\frac{x^5}{5!}+\cdots,
\end{align*}
\begin{align*}
R(x,p,q)&=1+2x+(4p+2q)\frac{x^2}{2!}+(2+8p+12q+2q^2)\frac{x^3}{3!}\\
&+(16p+16p^2+(2+56p)q+(12+16p)q^2+2q^3)\frac{x^4}{4!}\\
&+(2+88p+32p^2+(60+240p)q+(148+88p)q^2+60q^3+2q^4)\frac{x^5}{5!}+\cdots.
\end{align*}

Note that
\begin{align*}
D^{2n+1}(w)&=D(D^{2n}(w))\\
&=D\left(w\sum_{i,j\geq 0}t_{2n,i,j}x^{2i}y^{j}z^{2n-2i-j}\right)\\
&=w\sum_{i,j\geq 0}t_{2n,i,j}x^{2i+1}y^{j}z^{2n-2i-j}+w\sum_{i,j\geq 0}2it_{2n,i,j}x^{2i-1}y^{j+1}z^{2n-2i-j+1}+\\
&w\sum_{i,j\geq 0}jt_{2n,i,j}x^{2i+1}y^{j-1}z^{2n-2i-j+1}+w\sum_{i,j\geq 0}(2n-2i-j)t_{2n,i,j}x^{2i+1}y^{j+1}z^{2n-2i-j-1}.
\end{align*}
Hence
\begin{equation}\label{t-recu-1}
t_{2n+1,i,j}=t_{2n,i,j}+(2i+2)t_{2n,i+1,j-1}+(j+1)t_{2n,i,j+1}+(2n-2i-j+1)t_{2n,i,j-1}.
\end{equation}
Similarly,
\begin{equation}\label{t-recu-2}
t_{2n,i,j}=t_{2n-1,i-1,j}+(2i+1)t_{2n-1,i,j-1}+(j+1)t_{2n-1,i-1,j+1}+(2n-2i-j+1)t_{2n-1,i-1,j-1}.
\end{equation}
By rewriting these recurrence relations in terms of generating functions $TE$ and $TO$, we obtain the following result.

\begin{lemma}\label{lem:tt1} We have
\begin{align}\label{t-PDE}
\left\{\begin{array}{ll}
TO_x&=TE+2q(1-p)TE_p+(1-q^2)TE_q+xqTE_x,\\
TE_x&=(p+q-qp)TO+2pq(1-p)TO_p+p(1-q^2)TO_q+xqpTO_x.
\end{array}\right.
\end{align}
Equivalently, $(\widetilde{TO},\widetilde{TE})$ is a Jacobi-pair of the second type, where $\widetilde{TO}=\sqrt{\frac{p(1+q)}{1-q}}TO$ and $\widetilde{TE}=\sqrt{\frac{1+q}{1-q}}TE$.
\end{lemma}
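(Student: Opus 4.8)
The plan is to prove the two assertions separately: first that $(TE,TO)$ satisfies the system~\eqref{t-PDE}, and then that the rescaled pair $(TO',TE')$ satisfies the $J$-pair system~\eqref{eqJpair-2}.

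For the first assertion I would translate the recurrences~\eqref{t-recu-1} and~\eqref{t-recu-2} into differential equations by the standard generating-function dictionary. Multiplying~\eqref{t-recu-1} by $\tfrac{x^{2n}}{(2n)!}p^iq^j$ and summing over $n,i,j\geq0$, and recalling that $TO_x=\sum_{n,i,j}t_{2n+1,i,j}\tfrac{x^{2n}}{(2n)!}p^iq^j$ while $TE=\sum_{n,i,j}t_{2n,i,j}\tfrac{x^{2n}}{(2n)!}p^iq^j$, the four terms on the right-hand side reassemble into $TE$ and its derivatives. Here a linear factor $i$ in the recurrence corresponds to the Euler operator $p\partial_p$, a factor $j$ to $q\partial_q$, and the degree factor $2n$ to $x\partial_x$, while each shift $i\mapsto i\pm1$ or $j\mapsto j\pm1$ is compensated by a power $p^{\mp1}$ or $q^{\mp1}$; collecting the contributions produces exactly the coefficients $2q(1-p)$, $1-q^2$ and $qx$ of the first line of~\eqref{t-PDE}. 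Treating~\eqref{t-recu-2} the same way, now with $TE_x=\sum_{n,i,j}t_{2n,i,j}\tfrac{x^{2n-1}}{(2n-1)!}p^iq^j$ and the degree factor handled via $2n=(2n-1)+1$, gives the second line. This step is routine index bookkeeping; the only care needed is tracking the shifted-index terms so that the unweighted summands yield the inhomogeneous pieces $TE$ and $(p+q-pq)TO$.

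For the second assertion, set $\beta=\sqrt{(1+q)/(1-q)}$ and $\alpha=\sqrt p\,\beta$, so that $TE'=\beta\,TE$ and $TO'=\alpha\,TO$. I would first record the elementary identities
$$\beta_p=0,\qquad \beta_q=\frac{\beta}{1-q^2},\qquad \alpha_p=\frac{\alpha}{2p},\qquad \alpha_q=\frac{\alpha}{1-q^2},$$
which follow at once by logarithmic differentiation, together with $\sqrt p\,\alpha=p\beta$. Substituting $TE'=\beta TE$ and $TO'=\alpha TO$ with $\widetilde F=TO'$, $\widetilde G=TE'$ into the two lines of~\eqref{eqJpair-2} and clearing the prefactors with the help of~\eqref{t-PDE} then reduces each line to an identity.

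The crux, and the step I expect to be the main obstacle, is verifying that the terms created by differentiating the prefactors land precisely on the inhomogeneous pieces of~\eqref{t-PDE}. In the first $J$-pair equation the contribution $\sqrt p(1-q^2)\beta_q\,TE=\sqrt p\,\beta\,TE=\alpha\,TE$ supplies exactly the missing $TE$, so the right-hand side collapses to $\alpha$ times the bracket in the first line of~\eqref{t-PDE}, matching $TO'_x=\alpha\,TO_x$. In the second equation the two prefactor derivatives combine through $\tfrac{q(1-p)}{\sqrt p}+\sqrt p=\tfrac{p+q-pq}{\sqrt p}$ to rebuild the coefficient $p+q-pq$ of $TO$, while the remaining terms reproduce $2pq(1-p)$, $p(1-q^2)$ and $pqx$ once $\sqrt p\,\alpha=p\beta$ is used; the right-hand side then equals $\beta$ times the bracket in the second line of~\eqref{t-PDE}, matching $TE'_x=\beta\,TE_x$. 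With both matchings in place, $(TO',TE')$ solves~\eqref{eqJpair-2} and the proof is complete.
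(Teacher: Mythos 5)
Your proposal is correct and follows exactly the route the paper intends: the paper itself merely asserts that rewriting the recurrences \eqref{t-recu-1} and \eqref{t-recu-2} in terms of $TE$ and $TO$ gives \eqref{t-PDE}, and your index-shift dictionary (with the degree factor split as $2n=(2n-1)+1$ producing the inhomogeneous $-pq\,TO$ piece) fills in precisely those omitted computations. Your verification of the $J$-pair rescaling, including the identities $\beta_q=\beta/(1-q^2)$, $\alpha_p=\alpha/(2p)$ and the recombination $q(1-p)/\sqrt p+\sqrt p=(p+q-pq)/\sqrt p$, checks out and correctly supplies the detail behind the paper's one-word ``Equivalently''.
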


\begin{theorem}\label{th_TT}
Let
$\ell'_{p,q}=\sqrt{\frac{1-q^2}{1-p}}\ell_{p,q}$.
Then we have
\begin{align*}
\left\{\begin{array}{ll}
TO(x,p,q)&=\frac{q-1}{\sqrt{p(p-1)}}\sn(-\sqrt{q^2-1}x+\ell'_{p,q},\sqrt{\frac{p-q^2}{1-q^2}}),\\
TE(x,p,q)&=\sqrt{\frac{1-q}{1+q}}\dn(-\sqrt{q^2-1}x-\ell'_{p,q},\sqrt{\frac{p-q^2}{1-q^2}}).
\end{array}\right.
\end{align*}
\end{theorem}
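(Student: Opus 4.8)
The plan is to follow the template established in the proofs of Theorems~\ref{thCCth} and~\ref{thDDth}, but using the second-type machinery of Remark~\ref{GenJP-2} in place of Remark~\ref{GenJP}. By Lemma~\ref{lem:tt1}, the pair $(TO',TE')$ with $TO'=\sqrt{\frac{p(1+q)}{1-q}}TO$ and $TE'=\sqrt{\frac{1+q}{1-q}}TE$ is a $J$-pair of the second type, so~\eqref{eqJpair-2} applies. First I would form $\widetilde P=TO'-TE'$ and $\widetilde Q=TO'+TE'$ and invoke Remark~\ref{GenJP-2} to produce two analytic functions $W,\widetilde W$ with
\[
TO'-TE'=W\!\left(\tfrac{1-q^2}{1-p},\,\sqrt{p-1}\,x-\ell_{p,q}\right),\qquad
TO'+TE'=\widetilde W\!\left(\tfrac{1-q^2}{1-p},\,\sqrt{p-1}\,x+\ell_{p,q}\right).
\]
At this stage $W,\widetilde W$ are undetermined, and the whole content of the theorem is to pin them down.

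To do so I would specialize at $q=0$, exactly as in the proof of Theorem~\ref{lem:aa0}. Here $\ell_{p,0}=F(0,\cdot)=0$, while $TO'|_{q=0}=\sqrt{p}\,TO(x,p,0)$ and $TE'|_{q=0}=TE(x,p,0)$, so the two displayed relations collapse to expressions for $W$ and $\widetilde W$ along the curve $\bigl(\tfrac1{1-p},\sqrt{p-1}\,x\bigr)$ in terms of the seed functions $TO(x,p,0)$ and $TE(x,p,0)$. These seeds are obtained by solving the $q=0$ restriction of the system~\eqref{t-PDE}; guided by Abel's relations~\eqref{Abel-diff}, I expect (and would verify, with Maple as in Theorem~\ref{lem:aa0}) that they are Jacobi functions of imaginary argument with modulus $\sqrt p$, in agreement with setting $q=0$ in the asserted formulas, namely $TE(x,p,0)=\dn(-\sqrt{-1}\,x,\sqrt p)$ and $TO(x,p,0)$ proportional to $\sn(-\sqrt{-1}\,x,\sqrt p)$. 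Since the pair $\bigl(\tfrac1{1-p},\sqrt{p-1}\,x\bigr)$ sweeps a two-dimensional region as $(p,x)$ vary, knowing $W,\widetilde W$ on this curve determines them as bivariate functions everywhere.

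With $W,\widetilde W$ identified, I would substitute back and simplify. The key algebraic identity is the rescaling
\[
\sqrt{\tfrac{1-q^2}{1-p}}\bigl(\sqrt{p-1}\,x\mp\ell_{p,q}\bigr)=\sqrt{q^2-1}\,x\mp\ell'_{p,q},
\]
which follows from $\sqrt{p-1}=\sqrt{-1}\,\sqrt{1-p}$ and the definition $\ell'_{p,q}=\sqrt{\frac{1-q^2}{1-p}}\ell_{p,q}$, together with the observation that the first slot $\frac{1-q^2}{1-p}$ produces precisely the modulus $\sqrt{1-\frac{1-p}{1-q^2}}=\sqrt{\frac{p-q^2}{1-q^2}}$. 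Solving the two linear relations for $TO'$ and $TE'$ and dividing out the prefactors $\sqrt{\frac{p(1+q)}{1-q}}$ and $\sqrt{\frac{1+q}{1-q}}$ should return the claimed expressions. The proof then closes, as in Theorem~\ref{thCCth}, with a direct substitution checking that these $TO,TE$ satisfy~\eqref{t-PDE}.

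The main obstacle is the middle step: correctly extracting the seed functions $TO(x,p,0),TE(x,p,0)$ as Jacobi elliptic functions and transporting them through Jacobi's imaginary transformation while keeping track of the modulus change. The most delicate point is the phase, since one must choose $\ell_{p,q}$ (equivalently the elliptic integral defining it) so that the initial conditions $TE(0,p,q)=1$ and $TO(0,p,q)=0$ hold simultaneously, and checking this compatibility against the $\sn$--$\dn$ relations is where the computation is heaviest. The final verification that the candidate satisfies~\eqref{t-PDE} is routine but lengthy, and I would delegate the bookkeeping to a computer algebra system.
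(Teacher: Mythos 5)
Your proposal follows the paper's proof essentially step for step: invoke Remark~\ref{GenJP-2} via Lemma~\ref{lem:tt1} to get the $W,\widetilde W$ representation, pin down $W,\widetilde W$ by specializing at $q=0$ (where $\ell_{p,0}=0$) using the seeds $TE(x,p,0)=\dn(Ix,\sqrt p)$ and $TO(x,p,0)=-I\sn(Ix,\sqrt p)$, substitute back with the rescaling of the argument, and close with a routine verification against \eqref{t-PDE}. This is the same argument as in the paper, so no further comparison is needed.
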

\begin{proof}
By Remark~\ref{GenJP-2}, we see that Lemma \ref{lem:tt1} leads to
\begin{align}\label{eqtt11}
\left\{\begin{array}{ll}
\sqrt{\frac{p(1+q)}{1-q}}TO(x,p,q)-\sqrt{\frac{1+q}{1-q}}TE(x,p,q)&=W\left(\frac{1-q^2}{1-p},\sqrt{p-1}x-\ell_{p,q}\right),\\
\sqrt{\frac{p(1+q)}{1-q}}TO(x,p,q)+\sqrt{\frac{1+q}{1-q}}TE(x,p,q)&=\widetilde{W}\left(\frac{1-q^2}{1-p},\sqrt{p-1}x+\ell_{p,q}\right),\\
\end{array}\right.
\end{align}
for some functions $W$ and $\widetilde{W}$. Thus, at $q=0$, we have
\begin{align*}
\left\{\begin{array}{ll}
\sqrt{p}TO(I\sqrt{p}x,1-1/p,0)-TE(I\sqrt{p}x,1-1/p,0)&=W\left(p,x\right),\\
\sqrt{p}TO(I\sqrt{p}x,1-1/p,0)+TE(I\sqrt{p}x,1-1/p,0)&=\widetilde{W}\left(p,x\right),\\
\end{array}\right.
\end{align*}
where $I^2=-1$. Therefore, if we set
$$TE(x,p,0)=\dn(Ix,\sqrt{p})\mbox{ and }TO(x,p,0)=-I\sn(Ix,\sqrt{p}),$$
then
\begin{align*}
\left\{\begin{array}{ll}
-I\sqrt{p}\sn(-\sqrt{p}x,\sqrt{1-1/p})-\dn(-\sqrt{p}x,\sqrt{1-1/p})&=W\left(p,x\right),\\
-I\sqrt{p}\sn(-\sqrt{p}x,\sqrt{1-1/p})+\dn(-\sqrt{p}x,\sqrt{1-1/p})&=\widetilde{W}\left(p,x\right).
\end{array}\right.
\end{align*}
By~\eqref{eqtt11}, we obtain
\begin{align*}
\left\{\begin{array}{ll}
&\sqrt{\frac{p(1+q)}{1-q}}TO(x,p,q)-\sqrt{\frac{1+q}{1-q}}TE(x,p,q)\\
&\qquad=-\sqrt{\frac{q^2-1}{1-p}}\sn(-\sqrt{q^2-1}x+\ell'_{p,q},\sqrt{\frac{p-q^2}{1-q^2}})-\dn(-\sqrt{q^2-1}x+\ell'_{p,q},\sqrt{\frac{p-q^2}{1-q^2}}),\\ &\sqrt{\frac{p(1+q)}{1-q}}TO(x,p,q)+\sqrt{\frac{1+q}{1-q}}TE(x,p,q)\\
&\qquad=-\sqrt{\frac{q^2-1}{1-p}}\sn(-\sqrt{q^2-1}x-\ell'_{p,q},\sqrt{\frac{p-q^2}{1-q^2}})+\dn(-\sqrt{q^2-1}x-\ell'_{p,q},\sqrt{\frac{p-q^2}{1-q^2}}),
\end{array}\right.
\end{align*}
which implies
\begin{align*}
\left\{\begin{array}{ll}
TO(x,p,q)&=\frac{q-1}{\sqrt{p(p-1)}}\sn(-\sqrt{q^2-1}x+\ell'_{p,q},\sqrt{\frac{p-q^2}{1-q^2}}),\\
TE(x,p,q)&=\sqrt{\frac{1-q}{1+q}}\dn(-\sqrt{q^2-1}x-\ell'_{p,q},\sqrt{\frac{p-q^2}{1-q^2}}),
\end{array}\right.
\end{align*}
which agrees with the case $q=0$. To complete the proof, we have to check that the functions $TO$ and $TE$ satisfy Lemma~\ref{lem:tt1}, which is a  routine procedure.
\end{proof}

By the above theorem (or by a direct check using Lemma~\ref{lem:tt1}), we obtain the following result.

\begin{corollary}
Let $h(x,p)=\frac{\sqrt{p-1}}{\sqrt{p-1}\cosh(x\sqrt{p-1})-\sqrt{p}\sinh(x\sqrt{p-1})}$. Then, we have
\begin{align*}
T(x,p,1)&=\frac12(h(x,p)+h(-x,p))+\frac1{2\sqrt{p}}(h(x,p)-h(-x,p)),\\
T(x,1,q)&=\frac{q^2-1+\sqrt{q^2-1}\sinh(x\sqrt{q^2-1})}{(1+q)(q-\cosh(x\sqrt{q^2-1}))}.
\end{align*}
\end{corollary}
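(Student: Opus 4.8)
The plan is to read off both identities as the two boundary specializations $q=1$ and $p=1$ of Theorem~\ref{th_TT}, but to \emph{prove} them by a self-contained verification against Lemma~\ref{lem:tt1} rather than by naive substitution. The crucial preliminary observation is that although Lemma~\ref{lem:tt1} is a system of PDEs in the three variables $x,p,q$, on each of the two lines $q=1$ and $p=1$ exactly those coefficients that multiply an ``extra'' variable's derivative vanish, so the restricted equations close up into a $2\times2$ first-order system for the two restricted functions alone. Setting $q=1$ annihilates the factor $(1-q^2)$ in front of $TE_q,TO_q$ and leaves
\begin{align*}
\left\{\begin{array}{ll}
TO_x=TE+2(1-p)TE_p+xTE_x,\\
TE_x=TO+2p(1-p)TO_p+xpTO_x
\end{array}\right.
\end{align*}
(a system in $x,p$), whereas setting $p=1$ annihilates $2q(1-p)$ and $2pq(1-p)$ and leaves
\begin{align*}
\left\{\begin{array}{ll}
TO_x=TE+(1-q^2)TE_q+xqTE_x,\\
TE_x=TO+(1-q^2)TO_q+xqTO_x
\end{array}\right.
\end{align*}
(a system in $x,q$). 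Expanding in powers of $x$ with the initial data $TE(0,\cdot,\cdot)=1$ and $TO(0,\cdot,\cdot)=0$ (which come from $t_{0,0,0}=1$) turns each reduced system into a recursion that determines $TE$ and $TO$ uniquely as formal power series order by order; hence it suffices to produce closed forms obeying the reduced system and the initial conditions.

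For the line $p=1$ the reduced system is symmetric under $TE\leftrightarrow TO$, so $Q=TO+TE=T(x,1,q)$ itself satisfies the single scalar linear PDE
\begin{align*}
(1-xq)Q_x-(1-q^2)Q_q-Q=0,
\end{align*}
with $Q(0,q)=1$. I would integrate this by the method of characteristics, fix the arbitrary function by the boundary value, and then re-express the solution through $\cosh(x\sqrt{q^2-1})$ and $\sinh(x\sqrt{q^2-1})$ (using $\sqrt{q^2-1}=I\sqrt{1-q^2}$ when $0<q<1$) to obtain the stated formula; the value at $x=0$ is immediate, since both numerator and denominator collapse to $-(1-q^2)$.

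For the line $q=1$ the reduced system is \emph{not} symmetric, so the $P,Q$ trick does not decouple it and the rescaling of Lemma~\ref{lem:tt1} is unavailable (its factor $\sqrt{(1+q)/(1-q)}$ is singular at $q=1$). Here I would instead guess the answer from $h(x,p)$ and verify it. A short expansion with $s=\sqrt{p-1}$ gives $h=1+\sqrt p\,x+\tfrac{p+1}{2}x^2+\cdots$, which matches $T(x,p,q)$ at $q=1$ once one takes $TE(x,p,1)$ to be the even part $\tfrac12(h(x,p)+h(-x,p))$ and $TO(x,p,1)=\tfrac1{\sqrt p}$ times the odd part $\tfrac12(h(x,p)-h(-x,p))$; this is exactly the combination in the statement. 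Substituting this candidate into both equations of the $q=1$ reduced system and computing $h_x,h_p$ in terms of $\cosh(x\sqrt{p-1})$ and $\sinh(x\sqrt{p-1})$ completes the check, while $h(0,p)=1$ supplies the initial conditions.

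The step I expect to be the real obstacle is the temptation to obtain these formulas by plugging $p=1$ or $q=1$ straight into Theorem~\ref{th_TT}: this is \emph{not} legitimate here, because the prefactor $\tfrac{q-1}{\sqrt{p(p-1)}}$ and the shift $\ell'_{p,q}$ both blow up while the elliptic integral $\ell_{p,q}$ tends to the (divergent) complete integral, so the limit is a genuine $0\cdot\infty$ indeterminate entangled with a degenerating Jacobi modulus. Making that limit rigorous would require matched asymptotics (or a reciprocal-modulus transformation) of $\sn$ and $\dn$; the clean route is to bypass the elliptic functions entirely and rest the proof on the reduced PDEs, where the only genuine work is the characteristic integration on the line $p=1$ and the differentiation check on the line $q=1$, after which uniqueness of the power-series solution finishes the argument.
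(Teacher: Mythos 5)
Your proposal is correct, and it is essentially a careful elaboration of the route the paper only gestures at in its parenthetical ``(or by a direct check using Lemma~\ref{lem:tt1})''; the paper's primary stated justification, ``by the above theorem'', is exactly the naive specialization you rightly flag as delicate. Your key structural observation is sound and is what makes the direct check work: in Lemma~\ref{lem:tt1} the coefficients $(1-q^2)$, $p(1-q^2)$ of the $q$-derivatives vanish at $q=1$, and the coefficients $2q(1-p)$, $2pq(1-p)$ of the $p$-derivatives vanish at $p=1$, so each restriction closes into a $2\times 2$ system in two variables whose power-series solution is uniquely determined by $TE(0,\cdot,\cdot)=1$, $TO(0,\cdot,\cdot)=0$ (the coefficient of $x^{n+1}$ in one function is read off from the coefficient of $x^n$ in the other). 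Your reduced systems are transcribed correctly, the symmetry at $p=1$ does reduce the problem to the single scalar equation $(1-xq)Q_x-(1-q^2)Q_q-Q=0$ for $Q=T(x,1,q)$, and your candidate at $q=1$ matches the series $T(x,p,1)=1+x+\tfrac{p+1}{2}x^2+\tfrac{p+5}{6}x^3+\cdots$ with $TE$ the even part of $h$ and $TO=\tfrac{1}{\sqrt p}\times$ the odd part. What your approach buys over the paper's nominal one is that it avoids the genuinely indeterminate limits in Theorem~\ref{th_TT}: at $q=1$ the prefactor $\tfrac{q-1}{\sqrt{p(p-1)}}$ vanishes while the modulus $\sqrt{(p-q^2)/(1-q^2)}$ diverges and $\ell_{p,q}\to F(1,1)=\infty$, and at $p=1$ the prefactor and the shift $\ell'_{p,q}$ are both of the form $0\cdot\infty$; the paper never addresses this. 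What it costs is that the two verifications (the characteristic integration at $p=1$ and the differentiation check of $h$ at $q=1$) are only sketched, not executed --- but that is no worse than the paper, and uniqueness of the formal power-series solution does legitimately close the argument once those computations are done.
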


Along the same lines, we have
\begin{equation}\label{rn-recu}
\begin{split}
r_{2n+1,i,j}&=2r_{2n,i,j}+(2i+2)r_{2n,i+1,j-1}+(j+1)r_{2n,i,j+1}+(2n-2i-j+1)r_{2n,i,j-1},\\
r_{2n,i,j}&=2r_{2n-1,i-1,j}+(2i+1)r_{2n-1,i,j-1}+(j+1)r_{2n-1,i-1,j+1}+\\
& (2n-2i-j+1)r_{2n-1,i-1,j-1},
\end{split}
\end{equation}
which implies the following result.

\begin{lemma}\label{lem:rr1} We have
\begin{align}\label{r-PDE}
\left\{\begin{array}{ll}
RO_x&=2RE+2q(1-p)RE_p+(1-q^2)RE_q+xqRE_x,\\
RE_x&=(2p+q-pq)RO+2pq(1-p)RO_p+p(1-q^2)RO_q+xpqRO_x.
\end{array}\right.
\end{align}
Equivalently, $(\widetilde{RO},\widetilde{RE})$ is a Jacobi-pair of the second type, where $\widetilde{RO}=\frac{\sqrt{p}(1+q)}{1-q}RO$ and $\widetilde{RE}=\frac{1+q}{1-q}RE$.
\end{lemma}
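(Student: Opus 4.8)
The plan is to follow verbatim the route used to pass from the recurrences~\eqref{t-recu-1}--\eqref{t-recu-2} to Lemma~\ref{lem:tt1}, since~\eqref{rn-recu} has exactly the same shape as the $t$-recurrences, only with a few leading coefficients changed (a factor $2$ in the first summand, and $2i+1$ in place of $2i$ in the recurrence for $r_{2n,i,j}$). Write $RO=\sum_{n,i,j}r_{2n+1,i,j}\frac{x^{2n+1}}{(2n+1)!}p^iq^j$ and $RE=\sum_{n,i,j}r_{2n,i,j}\frac{x^{2n}}{(2n)!}p^iq^j$. First I would compute $RO_x=\sum_{n,i,j}r_{2n+1,i,j}\frac{x^{2n}}{(2n)!}p^iq^j$ and feed in the recurrence for $r_{2n+1,i,j}$. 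Reindexing each of the four summands turns it into an operator acting on $RE$: the constant summand gives $2RE$; the shift $r_{2n,i+1,j-1}$ carrying the weight $2i+2$ produces $2q\,RE_p$; the shift $r_{2n,i,j+1}$ with weight $j+1$ produces $RE_q$; and the shift $r_{2n,i,j-1}$ with weight $2n-2i-j+1$ is the decisive term.

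The key manipulation is the handling of the linear weight $2n-2i-j+1$. After the substitution $j\mapsto j-1$ this weight becomes $2n-2i-j$, and I would split it into three pieces. The $2n$ piece converts $\frac{x^{2n}}{(2n)!}$ into $x\cdot\frac{x^{2n-1}}{(2n-1)!}$ and so yields $xq\,RE_x$; the $-2i$ piece yields $-2pq\,RE_p$, which combines with the earlier $2q\,RE_p$ to give the coefficient $2q(1-p)$; and the $-j$ piece yields $-q^2\,RE_q$, which combines with the earlier $RE_q$ to give $(1-q^2)$. Collecting the pieces reproduces the first identity of~\eqref{r-PDE}. The second identity, expressing $RE_x$ in terms of $RO$, follows in the same manner from the recurrence for $r_{2n,i,j}$ (applied to $RE_x=\sum_{n\ge1}r_{2n,i,j}\frac{x^{2n-1}}{(2n-1)!}p^iq^j$, after shifting the summation index so that the recurrence for $r_{2m+2,i,j}$ is used); here the three undifferentiated contributions $2p\,RO$, $q\,RO$ and $-pq\,RO$ accumulate to the coefficient $2p+q-pq$.

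To recast~\eqref{r-PDE} as a $J$-pair of the second type, I would set $RO'=\frac{\sqrt p(1+q)}{1-q}RO$ and $RE'=\frac{1+q}{1-q}RE$ and check that $(RO',RE')$ satisfies~\eqref{eqJpair-2}. Abbreviating $\nu=\frac{1+q}{1-q}$, the two normalizing factors are $\sqrt p\,\nu$ and $\nu$; since both depend on $q$ (and the first also on $p$), the chain rule produces extra undifferentiated terms, e.g.\ $RE'_q=\nu_q\,RE+\nu\,RE_q$. The arithmetic heart of this step is the identity $(1-q^2)\nu_q/\nu=2$, which follows from $\nu_q=\frac{2}{(1-q)^2}$ and is exactly what turns the inhomogeneous term $2RE$ of~\eqref{r-PDE} into the homogeneous shape of~\eqref{eqJpair-2}; a companion computation of the $p$- and $q$-derivatives of $\sqrt p\,\nu$ accounts for the coefficient $2p+q-pq$ in the second equation and confirms the homogeneous $J$-pair form.

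I expect the only real difficulty to be bookkeeping rather than conceptual: tracking the index shifts without sign errors, and verifying that the derivatives of the normalizing factors $\sqrt p\,\nu$ and $\nu$ precisely absorb the inhomogeneous terms $2RE$ and $(2p+q-pq)RO$ into the $J$-pair form~\eqref{eqJpair-2}. This is a finite and routine, if delicate, calculation; in practice I would either carry it out directly or simply verify a posteriori that the stated $(RO',RE')$ solves~\eqref{eqJpair-2}, exactly as the analogous checks were performed for~\eqref{t-PDE} in the proof of Lemma~\ref{lem:tt1}.
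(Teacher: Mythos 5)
Your proposal is correct and takes essentially the same approach as the paper, which derives Lemma~\ref{lem:rr1} by translating the recurrence~\eqref{rn-recu} into the PDE system ``along the same lines'' as Lemma~\ref{lem:tt1} and leaves the index-shift bookkeeping and the normalization check implicit; your computation (in particular the identity $(1-q^2)\nu_q/\nu=2$ for $\nu=\frac{1+q}{1-q}$, which absorbs the inhomogeneous terms $2RE$ and $(2p+q-pq)RO$ into the homogeneous form~\eqref{eqJpair-2}) supplies exactly those omitted details. The only blemish is the parenthetical comparison of the $t$- and $r$-recurrences (both already have the coefficient $2i+1$ in the even-index case; the sole change is the factor $2$ on the first summand), but since you work directly from the printed recurrence~\eqref{rn-recu} this does not affect the argument.
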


Along the line of the proof of Theorem~\ref{th_TT}, we state the following result.

\begin{theorem}\label{th_RR}
Let
\begin{align*}
\left\{\begin{array}{ll}
U\left(p,x\right)&=-2I\sqrt{p}\dn(-\sqrt{p}x,p')\sn(-\sqrt{p}x,p')-2p\cn^2(-\sqrt{p}x,p')+1-2/p,\\
\widetilde{U}\left(p,x\right)&=-2I\sqrt{p}\dn(-\sqrt{p}x,p')\sn(-\sqrt{p}x,p')+2p\cn^2(-\sqrt{p}x,p')-1+2/p,
\end{array}\right.
\end{align*}
where $p'=\sqrt{1-1/p}$. Then
\begin{align*}
\left\{\begin{array}{ll}
RO(x,p,q)&=\frac{\sqrt{p}(1-q)}{2(1+q)}\left(U\left(\frac{1-q^2}{1-p},\sqrt{p-1}x-\ell_{p,q}\right)+\widetilde{U}\left(\frac{1-q^2}{1-p},\sqrt{p-1}x+\ell_{p,q}\right)\right),\\
RE(x,p,q)&=\frac{1-q}{2(1+q)}\left(\widetilde{U}\left(\frac{1-q^2}{1-p},\sqrt{p-1}x+\ell_{p,q}\right)-U\left(\frac{1-q^2}{1-p},\sqrt{p-1}x-\ell_{p,q}\right)\right).
\end{array}\right.
\end{align*}
\end{theorem}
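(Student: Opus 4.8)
The plan is to imitate the proof of Theorem~\ref{th_TT} closely, the only genuine difference being the boundary data at $q=0$. By Lemma~\ref{lem:rr1} the pair $(RO',RE')$, with $RO'=\frac{\sqrt p(1+q)}{1-q}RO$ and $RE'=\frac{1+q}{1-q}RE$, is a $J$-pair of the second type, so Remark~\ref{GenJP-2} applies and produces two analytic functions $W$ and $\widetilde W$ with
\begin{align*}
\left\{\begin{array}{ll}
RO'-RE'&=W\left(\frac{1-q^2}{1-p},\,\sqrt{p-1}\,x-\ell_{p,q}\right),\\[4pt]
RO'+RE'&=\widetilde W\left(\frac{1-q^2}{1-p},\,\sqrt{p-1}\,x+\ell_{p,q}\right).
\end{array}\right.
\end{align*}
My task is to identify $W$ and $\widetilde W$; these are exactly the functions $U$ and $\widetilde U$ recorded in the statement.

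First I would specialize the two identities above to $q=0$. There $\ell_{p,0}=0$, while $RO'=\sqrt p\,RO$ and $RE'=RE$, so the identities collapse to expressions for $W\!\left(\frac1{1-p},\sqrt{p-1}\,x\right)$ and $\widetilde W\!\left(\frac1{1-p},\sqrt{p-1}\,x\right)$ in terms of $RO(x,p,0)$ and $RE(x,p,0)$. Performing the substitution $p\mapsto 1-1/p$ together with the rescaling $x\mapsto I\sqrt p\,x$ (just as in Theorem~\ref{th_TT}, with $I$ the imaginary unit) then rewrites these as $W(p,x)$ and $\widetilde W(p,x)$, now expressed through the boundary functions $RO(I\sqrt p\,x,1-1/p,0)$ and $RE(I\sqrt p\,x,1-1/p,0)$.

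The heart of the argument, and the step I expect to be the main obstacle, is the determination of the boundary data $RO(x,p,0)$ and $RE(x,p,0)$. Setting $q=0$ directly in~\eqref{r-PDE} does \emph{not} yield a closed system, since the right-hand sides retain the derivatives $RE_q$ and $RO_q$ evaluated at $q=0$; hence one cannot simply integrate. Instead I would proceed by an educated guess, dictated by the fact that $R$ is the ``$w^2$-analogue'' and is therefore quadratic in the basic Jacobi functions (in contrast to the single $\sn,\dn$ that appear in the $T$ case): I expect $RO(x,p,0)$ to be a multiple of the odd product $\dn(Ix,\sqrt p)\sn(Ix,\sqrt p)$ and $RE(x,p,0)$ to be a multiple of the even square $\cn^2(Ix,\sqrt p)$ plus a constant. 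The free multiplicative constants and the additive constant are then pinned down by the initial conditions $RE(0,p,0)=1$, $RO(0,p,0)=0$ together with a direct check against the $q=0$ recurrences behind~\eqref{r-PDE}. Substituting the resulting closed forms into the expressions of the previous paragraph yields precisely $U$ and $\widetilde U$.

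Finally I would feed $U$ and $\widetilde U$ back into the general-$q$ identities. Using the simplifications $\sqrt{\frac{1-q^2}{1-p}}\,\sqrt{p-1}=\sqrt{q^2-1}$, $\sqrt{\frac{1-q^2}{1-p}}\,\ell_{p,q}=\ell'_{p,q}$ and $1-\frac{1-p}{1-q^2}=\frac{p-q^2}{1-q^2}$, every Jacobi function is brought to modulus $\sqrt{\frac{p-q^2}{1-q^2}}$ and argument $-\sqrt{q^2-1}\,x\pm\ell'_{p,q}$. Adding and subtracting the two identities and then unwinding the definitions $RO=\frac{1-q}{\sqrt p(1+q)}RO'$ and $RE=\frac{1-q}{1+q}RE'$ gives the stated formulas for $RO$ and $RE$. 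As in Theorem~\ref{th_TT}, the genuinely rigorous part is the closing verification that these candidate functions satisfy the PDE system of Lemma~\ref{lem:rr1}; since they also match the $q=0$ data, uniqueness of the solution of~\eqref{r-PDE} under the given initial conditions finishes the proof.
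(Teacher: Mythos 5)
Your proposal follows the paper's proof essentially step for step: invoke Lemma~\ref{lem:rr1} and Remark~\ref{GenJP-2} to express $RO'\mp RE'$ through two unknown functions, identify those functions at $q=0$ by guessing the boundary data $RO(x,p,0)=-2I\dn(Ix,\sqrt p)\sn(Ix,\sqrt p)$ and $RE(x,p,0)=2p\cn^2(Ix,\sqrt p)-2p+1$, substitute back, and close with the routine verification against the PDE system. This matches the paper's argument, including its reliance on an educated guess validated a posteriori, so it is correct in the same sense the paper's proof is.
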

\begin{proof}
By Remark~\ref{GenJP-2},
we obtain
\begin{align}\label{eqrr11}
\left\{\begin{array}{ll}
\frac{\sqrt{p}(1+q)}{1-q}RO(x,p,q)-\frac{1+q}{1-q}RE(x,p,q)&=W\left(\frac{1-q^2}{1-p},\sqrt{p-1}x-\ell_{p,q}\right),\\
\frac{\sqrt{p}(1+q)}{1-q}RO(x,p,q)+\frac{1+q}{1-q}RE(x,p,q)&=\widetilde{W}\left(\frac{1-q^2}{1-p},\sqrt{p-1}x+\ell_{p,q}\right),\\
\end{array}\right.
\end{align}
for some functions $W$ and $\widetilde{W}$. Thus, at $q=0$, we have
\begin{align*}
\left\{\begin{array}{ll}
\sqrt{p}RO(I\sqrt{p}x,1-1/p,0)-RE(I\sqrt{p}x,1-1/p,0)&=W\left(p,x\right),\\
\sqrt{p}RO(I\sqrt{p}x,1-1/p,0)+RE(I\sqrt{p}x,1-1/p,0)&=\widetilde{W}\left(p,x\right),\\
\end{array}\right.
\end{align*}
where $I^2=-1$. Therefore, if we set
$$RE(x,p,0)=2p\cn^2(Ix,\sqrt{p})-2p+1\mbox{ and }RO(x,p,0)=-2I\dn(Ix,\sqrt{p})\sn(Ix,\sqrt{p}),$$
then
\begin{align*}
\left\{\begin{array}{ll}
-2I\sqrt{p}\dn(-\sqrt{p}x,p')\sn(-\sqrt{p}x,p')-2p\cn^2(-\sqrt{p}x,p')+1-2/p&=W\left(p,x\right),\\
-2I\sqrt{p}\dn(-\sqrt{p}x,p')\sn(-\sqrt{p}x,p')+2p\cn^2(-\sqrt{p}x,p')-1+2/p&=\widetilde{W}\left(p,x\right),
\end{array}\right.
\end{align*}
where $p'=\sqrt{1-1/p}$. By \eqref{eqrr11}, we have
\begin{align*}
\left\{\begin{array}{ll}
RO(x,p,q)&=\frac{\sqrt{p}(1-q)}{2(1+q)}\left(W\left(\frac{1-q^2}{1-p},\sqrt{p-1}x-
\ell_{p,q}\right)+\widetilde{W}\left(\frac{1-q^2}{1-p},\sqrt{p-1}x+\ell_{p,q}\right)\right),\\
RE(x,p,q)&=\frac{1-q}{2(1+q)}\left(\widetilde{W}\left(\frac{1-q^2}{1-p},\sqrt{p-1}x+\ell_{p,q}\right)-
W\left(\frac{1-q^2}{1-p},\sqrt{p-1}x-\ell_{p,q}\right)\right),
\end{array}\right.
\end{align*}
which agrees with the case $q=0$. To complete the proof, we have to check that the functions $RO$ and $RE$ satisfy Lemma~\ref{lem:rr1}, which is a routine procedure.
\end{proof}

\section{Applications}\label{Section-4}
\hspace*{\parindent}

In this section, we apply the results obtained in the previous section to present new characterizations for several combinatorial sequences.
\subsection{Peaks, descents and perfect matchings}
\hspace*{\parindent}

Perhaps one of the most important permutation statistics is the peaks statistic (see, e.g., \cite{Ma121,Ma122,Ma1303,Petersen07} and the references contained therein).
A {\it left peak} in $\pi$ is an index $i\in[n-1]$ such that $\pi(i-1)<\pi(i)>\pi(i+1)$, where we take $\pi(0)=0$.
Denote by $\widetilde{P}_{n,k}$ the number of permutations in $\msn$ with $k$ left peaks.
Recall that $P_{n,k}$ is the number of permutations in $\msn$ with $k$ interior peaks.
Define polynomials
$$P_n(x)=\sum_{k=0}^{\lrf{\frac{n-1}{2}}}P_{n,k}x^k,\quad
\widetilde{P}_n(x)=\sum_{k=0}^{\lrf{\frac{n}{2}}}\widetilde{P}_{n,k}x^k.$$
The polynomial $P_n(x)$ satisfies recurrence relation
\begin{equation*}\label{Pnx-1}
P_{n+1}(x)=(nx-x+2)P_n(x)+2x(1-x)\frac{d}{dx}P_n(x),
\end{equation*}
with the initial values
$P_1(x)=1,P_2(x)=2,P_3(x)=4+2x$, and the polynomial $\widetilde{P}_n(x)$ satisfies recurrence relation
\begin{equation}\label{Pnx-2}
\widetilde{P}_{n+1}(x)=(nx+1)\widetilde{P}_{n}(x)+2x(1-x)\frac{d}{dx}\widetilde{P}_{n}(x),
\end{equation}
with the initial values $\widetilde{P}_1(x)=1,\widetilde{P}_2(x)=1+x,\widetilde{P}_3(x)=1+5x$  (see~\cite[A008303,A008971]{Sloane}).

A {\it descent} of a permutation $\pi\in\msn$
is a position $i$ such that $\pi(i)>\pi(i+1)$. Denote by $\des(\pi)$ the number of descents of $\pi$.
 Let
\begin{equation*}
A_n(x)=\sum_{\pi\in\msn}x^{\des(\pi)}=\sum_{k=0}^{n-1}\Eulerian{n}{k}x^{k}.
\end{equation*}
The polynomial $A_n(x)$ is called an {\it Eulerian polynomial}.
Let $B_n$ denote the set of signed permutations of $\pm[n]$ such that $\pi(-i)=-\pi(i)$ for all $i$, where $\pm[n]=\{\pm1,\pm2,\ldots,\pm n\}$.
Let
$${B}_n(x)=\sum_{k=0}^nB(n,k)x^{k}=\sum_{\pi\in B_n}x^{\des_B(\pi)},$$
where
$\des_B(\pi)=|\{i\in[n]:\pi(i-1)>\pi({i})\}|$
with $\pi(0)=0$.
The polynomial $B_n(x)$ is called an {\it Eulerian polynomial of type $B$}, while $B(n,k)$ is called an {\it Eulerian number of type $B$}.


Recall that a {\it perfect matching} of $[2n]$ is a partition of $[2n]$ into $n$ blocks of size $2$.
Denote by $N({n,k})$ the number of perfect matchings of $[2n]$ with the restriction that only $k$ matching pairs
have odd smaller entries (see~\cite[{A185411}]{Sloane}). It is easy to verify that
\begin{equation}\label{recurrence-11}
N({n+1,k})=2kN({n,k})+(2n-2k+3)N({n,k-1}).
\end{equation}

We can now conclude the following result from the discussion above.
\begin{theorem}\label{mainthm:01}
For $n\geq 1$, we have
\begin{enumerate}
\item[\rm(i)] $\sum_{i,j\geq 0}a_{n,i,j}=(2n-1)!!$.
\item[\rm(ii)]$\sum_{j\geq 0}a_{n,i,j}=N(n,n-i)$.
\item[\rm(iii)]$\sum_{j\geq 0}a_{n,i,\lrf{\frac{n}{2}}}x^i=\sum_{j\geq 0}a_{n,i,\lrf{\frac{n}{2}}-i}x^i=\widetilde{P}_n(x)$.
\item[\rm(iv)]$\sum_{j\geq 0}c_{n,i,j}=2^n\Eulerian{n}{i}$.
\item[\rm(v)]$\sum_{j\geq 0}d_{n,i,j}=B(n,i)$.
\item[\rm(vi)] $\sum_{i\geq 0}c_{n,i,\lrf{\frac{n}{2}}}x^i=\sum_{i\geq 0}c_{n,i,\lrf{\frac{n}{2}}-i}x^i=P_{n+1}(x)$.
\item[\rm(vii)]$\sum_{i\geq 0}c_{2n-1,i,0}x^{2n-2-i}=\sum_{i\geq 0}c_{2n,i,0}x^{2n-1-i}=P_{2n}(x)$.
\item[\rm(viii)]$\sum_{i\geq 0}d_{n,i,\lrc{\frac{n}{2}}}x^i=\widetilde{P}_{n}(x)$ and $\sum_{i\geq 0}d_{n,i,\lrc{\frac{n}{2}}-i}x^i=\widetilde{P}_{n+1}(x)$.
\item[\rm(ix)]$\sum_{i\geq 0}d_{2n,i,0}x^{2n-i}=\sum_{i\geq 0}d_{2n+1,i,0}x^{2n+1-i}=\widetilde{P}_{2n+1}(x)$.
\end{enumerate}
\end{theorem}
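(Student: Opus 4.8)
The plan is to treat each of the nine items as a statement about a one-parameter specialization (a row sum, a single $q$-slice, a maximal-degree slice, or an anti-diagonal) of one of the trivariate arrays $\{a_{n,i,j}\},\{c_{n,i,j}\},\{d_{n,i,j}\}$, and to match it with the relevant classical sequence by one of two routes: summing the defining recurrences \eqref{a-recu-1}--\eqref{dn-recu} over the appropriate set of indices to obtain a univariate recurrence, or specializing the closed generating functions of Theorems~\ref{th-aa}, \ref{thCCth}, \ref{thDDth} (and Corollaries~\ref{CO:caseC}, \ref{CO:caseD}) at $p,q\in\{0,1\}$ and comparing with a known exponential generating function.

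For (i) I set $p=q=1$, i.e.\ form $S_n=\sum_{i,j}a_{n,i,j}$; summing \eqref{a-recu-1} and \eqref{a-recu-2} over all $i,j$ and reindexing, the $i,j$-dependent coefficients collapse to constants, giving $S_{2n}=(4n-1)S_{2n-1}$ and $S_{2n+1}=(4n+1)S_{2n}$ with $S_0=1$, hence $S_n=(2n-1)!!$. For (ii) I set $q=1$ and put $b_{n,i}=\sum_j a_{n,i,j}$; summing \eqref{a-recu-1}, \eqref{a-recu-2} over $j$, the first-moment terms $\sum_j j\,a_{n,i,j}$ cancel in pairs, leaving $b_{2n,i}=(2i+1)b_{2n-1,i}+(4n-2i)b_{2n-1,i-1}$ and $b_{2n+1,i}=(2i+1)b_{2n,i}+(4n+2-2i)b_{2n,i-1}$. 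Substituting $k=n-i$ turns these into the single recurrence \eqref{recurrence-11} for $N(n,k)$, and the base case $b_{1,0}=1=N(1,1)$ closes the induction, proving $b_{n,i}=N(n,n-i)$. Items (iv) and (v) are the cleanest: setting $q=1$ in $c$ and $d$ and reading $C(x,p,1)=\tfrac{p-1}{p-e^{2x(p-1)}}$ and $D(x,p,1)=\tfrac{(1-p)e^{(1-p)x}}{1-pe^{2(1-p)x}}$ from Corollaries~\ref{CO:caseC}, \ref{CO:caseD}, these are exactly the exponential generating functions $\sum_n 2^n A_n(p)\tfrac{x^n}{n!}$ and $\sum_n B_n(p)\tfrac{x^n}{n!}$ of the rescaled type~$A$ and type~$B$ Eulerian polynomials, yielding $\sum_j c_{n,i,j}=2^n\Eulerian{n}{i}$ and $\sum_j d_{n,i,j}=B(n,i)$.

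The peak items (iii), (vi), (viii) extract the maximal-$y$-degree slice $j=\lfloor n/2\rfloor$ and the anti-diagonal $i+j=\lfloor n/2\rfloor$ (and $\lceil n/2\rceil$ for $d$). I would show directly from \eqref{a-recu-1}--\eqref{dn-recu} that the slice polynomial $g_n(x)=\sum_i a_{n,i,\lfloor n/2\rfloor}x^i$ (resp.\ the corresponding $c$- and $d$-slices) satisfies the differential recurrence \eqref{Pnx-2} for $\widetilde P_n(x)$, or the analogous recurrence $P_{n+1}(x)=(nx-x+2)P_n(x)+2x(1-x)P_n'(x)$ in the $c$-case; matching two initial polynomials then closes each identity. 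The anti-diagonal version follows the same pattern and is conveniently obtained as the coefficient of $t^{\lfloor n/2\rfloor}$ after the substitution $(p,q)\mapsto(xt,t)$, since $\sum_{i,j}a_{n,i,j}x^i t^{i+j}$ has $t^m$-coefficient $\sum_i a_{n,i,m-i}x^i$. Finally (vii), (ix) concern the $y$-free slice $j=0$, i.e.\ the constant terms $C(x,p,0)$, $D(x,p,0)$ of Corollaries~\ref{CO:caseC}, \ref{CO:caseD}; here I would match the reversed (reciprocal) polynomials with $P_{2n}(x)$ and $\widetilde P_{2n+1}(x)$ using the trigonometric exponential generating functions for interior and left peaks.

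The main obstacle is the bookkeeping for the slice identities (iii), (vi), (vii), (viii), (ix). Because the index $\lfloor n/2\rfloor$ (resp.\ $\lceil n/2\rceil$) shifts by $0$ or $1$ when $n$ increases by $1$, summing the trivariate recurrences along a maximal slice or an anti-diagonal must be carried out separately for even and odd $n$, and one has to verify that the boundary contributions (where $i=0$ or where $j$ attains its extreme value) either vanish or recombine exactly into the $x g_n$, $g_n$, and $2x(1-x)g_n'$ terms of \eqref{Pnx-2}. The reciprocal-polynomial normalization in (vii) and (ix)—matching $\sum_i c_{2n,i,0}x^{2n-1-i}$ rather than $\sum_i c_{2n,i,0}x^{i}$ with $P_{2n}(x)$—is the most error-prone part and is where I would spend the most care.
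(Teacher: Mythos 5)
Your proposal is correct and follows essentially the same strategy as the paper's proof: establish the $a$-items by specializing at $p=q=1$ for (i), at $q=1$ for (ii), and along the slice $j=\lrf{n/2}$ for (iii) (matching the recurrence \eqref{Pnx-2} for $\widetilde{P}_n$), and then treat the $c$- and $d$-items by the analogous specializations, which is exactly what the paper does (it, too, only writes out the $a$-case in full). The only cosmetic difference is that for (i) and (ii) you sum the recurrences \eqref{a-recu-1}--\eqref{a-recu-2} directly and match \eqref{recurrence-11}, whereas the paper specializes the PDEs of Lemma~\ref{lem:aa1} and compares the resulting generating functions $1/\sqrt{1-2x}$ and $\sqrt{1-p}\,e^{x(1-p)}/\sqrt{1-pe^{2x(1-p)}}$ (citing \cite{Ma13} for the latter); the two formulations carry the same information.
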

\begin{proof}
We only prove the assertion for the sequence $a_{n,i,j}$ and the corresponding assertion for the other sequences follows from similar consideration.

(A)\quad Setting $p,q=1$ in Lemma \ref{lem:aa1} gives
\begin{align*}
\left\{\begin{array}{ll}
AO_x(x,1,1)&=AE(x,1,1)+2xAE_x(x,1,1),\\
AE_x(x,1,1)&=AO(x,1,1)+2xAO_x(x,1,1),
\end{array}\right.
\end{align*}
which implies $A_x(x,1,1)=A(x,1,1)+2xA_x(x,1,1)$. Therefore,
$$A(x,1,1)=\frac{A(0,1,1)}{\sqrt{1-2x}}=\frac{1}{\sqrt{1-2x}}=\sum_{n\geq0}\frac{n!}{2^n}\binom{2n}{n}\frac{x^n}{n!}.$$
Hence,
$\sum_{i,j\geq0}a_{n,i,j}=\frac{n!}{2^n}\binom{2n}{n}=(2n-1)!!,$
as required.

(B)\quad Setting $q=1$ in Lemma \ref{lem:aa1} gives
$$A_x(x,p,1)=A(x,p,1)+2p(1-p)A_p(x,p,1)+2xpA_x(x,1,1).$$
By $A(0,1,p)=1$, it is a routine to check that $A(x,p,1)=\frac{\sqrt{1-p}e^{x(1-p)}}{\sqrt{1-pe^{2x(1-p)}}}$. Therefore, by \cite[eq.~(25)]{Ma13} we have
\begin{align*}
A(px,1/p,1)=\frac{\sqrt{1-p}}{\sqrt{1-pe^{2x(1-p)}}}=\sum_{n,k\geq0}N(n,k)x^np^k,
\end{align*}
which implies that $A(x,p,1)=\sum_{n,k\geq0}N(n,n-k)x^np^k$. Hence $\sum_{j\geq0}a_{n,k,j}=N(n,n-k)$, as claimed.

(C)\quad
Let $f_{n,i}=a_{n,i,\lfloor n/2\rfloor}$. By \eqref{a-recu-1} and \eqref{a-recu-2}, we have
$$f_{n,i}=(2i+1)f_{n-1,i}+(n-2i+1)f_{n-1,i-1},\quad 0\leq i\leq \lfloor n/2\rfloor,$$
with $f_{0,0}=1$.
Define $f_n(x)=\sum_{i\geq0}f_{n,i}x^i$. Then
\begin{equation}\label{fny}
f_{n+1}(x)=(nx+1)f_{n}(x)+2x(1-x)\frac{d}{dx}f_{n}(x),
\end{equation}
with the initial condition $f_0(x)=1$.
By comparing~\eqref{fny} with~\eqref{Pnx-2}, we see that the polynomials $f_n(x)$ satisfy the same recurrence relation and
initial conditions as $\widetilde{P}_n(x)$, so they agree. Similarly, it is easy to verify that $$\sum_{j\geq 0}a_{n,i,\lrf{\frac{n}{2}}-i}x^i=\widetilde{P}_n(x),$$
which completes the proof.
\end{proof}

\subsection{Alternating runs and up-down runs}
\hspace*{\parindent}

Let $\pi=\pi(1)\pi(2)\cdots \pi(n)\in\msn$.
We say that $\pi$ changes
direction at position $i$ if either $\pi({i-1})<\pi(i)>\pi(i+1)$, or
$\pi(i-1)>\pi(i)<\pi(i+1)$, where $i\in\{2,3,\ldots,n-1\}$.
We say that $\pi$ has $k$ {\it alternating
runs} if there are $k-1$ indices $i$ such that $\pi$ changes
direction at these positions. The {\it up-down runs} of a permutation $\pi$ are the alternating runs of $\pi$ endowed with a 0
in the front.
Let $R(n,k)$ (resp.~$a_k(n)$) be the number of permutations of $\msn$ with $k$ alternating runs (resp. up-down runs).
For $n,k\ge 1$, the numbers $R(n,k)$ and $a_k(n)$ respectively satisfy the recurrence relations
\begin{equation*}\label{rnk-recurrence01}
R(n,k)=kR(n-1,k)+2R(n-1,k-1)+(n-k)R(n-1,k-2),
\end{equation*}
\begin{equation*}\label{rnk-recurrence02}
a_k(n)=ka_k(n-1)+a_{k-1}(n-1)+(n-k+1)a_{k-2}(n-1),
\end{equation*}
where $R(1,0)=a_0(0)=a_1(1)=1$ and $R(1,k)=a_0(n)=a_k(0)=0$ for $n,k\ge 1$
(see~\cite{Ma1303,Sta08}).




As in the proof of Theorem~\ref{mainthm:01}, it is a routine exercise to show
the following result.
\begin{theorem}\label{mainthm:02}
For $n\geq 1$, we have
\begin{enumerate}
\item[\rm(i)] $\sum_{i,j\geq 0}t_{n,i,j}=\sum_{i,j\geq 0}r_{n-1,i,j}=n!$.
\item[\rm(ii)] $\sum_{i\geq 0}t_{n,i,j}=a_{n-j}(n)$.
\item[\rm(iii)] $\sum_{j\geq 0}t_{n,i,j}=\widetilde{P}_{n,\lrf{n/{2}}-i}$.
\item[\rm(iv)]$\sum_{i\geq 0}r_{n,i,j}=R(n+1,n-j)$.
\item[\rm(v)]$\sum_{j\geq 0}r_{n,i,j}=P_{n+1,\lrf{{n}/{2}}-i}$.
\end{enumerate}
\end{theorem}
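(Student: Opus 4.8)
The plan is to follow the template of the proof of Theorem~\ref{mainthm:01}, treating each identity by specializing one or both of $p,q$ in the generating functions $T$ and $R$ and then matching the resulting one-variable generating function (or the recurrence of its coefficient polynomials) against the known data for the relevant statistic. Throughout I use that $[x^n/n!]\,T(x,p,q)=\sum_{i,j\geq0}t_{n,i,j}p^iq^j$, and similarly for $R$, so that setting $q=1$ isolates the $p$-marginal $\sum_i\bigl(\sum_j t_{n,i,j}\bigr)p^i$ while setting $p=1$ isolates the $q$-marginal $\sum_j\bigl(\sum_i t_{n,i,j}\bigr)q^j$. Under each specialization the PDE systems of Lemmas~\ref{lem:tt1} and~\ref{lem:rr1} degenerate into a tractable two-variable system, since the coefficients carrying $(1-p)$ or $(1-q^2)$ vanish.

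For part~(i) I set $p=q=1$. In Lemma~\ref{lem:tt1} the terms with $(1-p)$ and $(1-q^2)$ drop, leaving $TO_x=TE+xTE_x$ and $TE_x=TO+xTO_x$; adding these gives $(1-x)T_x=T$, whence $T(x,1,1)=(1-x)^{-1}$ once $T(0,1,1)=1$ is imposed, and reading off coefficients yields $\sum_{i,j}t_{n,i,j}=n!$. The identical computation on Lemma~\ref{lem:rr1} gives $(1-x)R_x=2R$, so $R(x,1,1)=(1-x)^{-2}=\sum_{n\geq0}(n+1)!\,x^n/n!$; hence $\sum_{i,j}r_{n,i,j}=(n+1)!$, equivalently $\sum_{i,j}r_{n-1,i,j}=n!$, as stated.

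For the $p$-marginals~(iii) and~(v) I set $q=1$, killing the $(1-q^2)$ terms. The reversal bookkeeping gives the target $\sum_i\bigl(\sum_j t_{n,i,j}\bigr)p^i=p^{\lfloor n/2\rfloor}\widetilde{P}_n(1/p)$, so it suffices to show that the $x$-coefficient polynomials of $T(x,p,1)$ are the reversals of the $\widetilde{P}_n$. I verify this by extracting from the $q=1$ system the recurrence obeyed by these polynomials and comparing it termwise with~\eqref{Pnx-2}, exactly as $f_n(x)$ was matched to $\widetilde{P}_n(x)$ in the proof of Theorem~\ref{mainthm:01}(C); the explicit $T(x,p,1)$ recorded in the corollary after Theorem~\ref{th_TT} furnishes an independent check. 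Part~(v) is handled the same way using Lemma~\ref{lem:rr1} at $q=1$ together with the recurrence for $P_n(x)$ displayed before~\eqref{Pnx-2}, the shift from $P_n$ to $P_{n+1}$ reflecting the extra differentiation built into $D(w^2)=2w^2x$.

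For the $q$-marginals~(ii) and~(iv) I set $p=1$ and read $[x^n/n!]\,T(x,1,q)=\sum_j a_{n-j}(n)q^j$ against the explicit $T(x,1,q)$ of the corollary after Theorem~\ref{th_TT}; under the substitution $k=n-j$ this is the exponential generating function for the longest alternating subsequence statistic $\as$, which I identify with Stanley's generating function~\cite{Sta08,Sta10}. For~(iv) I specialize Lemma~\ref{lem:rr1} (or Theorem~\ref{th_RR}) at $p=1$, obtain $R(x,1,q)$, and match $\sum_j R(n+1,n-j)q^j=q^nR_{n+1}(1/q)$, invoking the relation $R_n(x)=\dfrac{x(1+x)^{n-2}}{2^{n-2}}P_n\!\left(\dfrac{2x}{1+x}\right)$ from~\cite{Ma1303} to pass between the alternating-run polynomials and $P_n$. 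The main obstacle lies precisely here: the generating functions for $a_k(n)$ and for $R(n,k)$ are, as Canfield and Wilf observed, genuinely elusive, so the real work is reconciling our trigonometric and hyperbolic closed forms with the established formulas while tracking the reversal $j\mapsto n-j$ and the index shift $n\mapsto n+1$ without error; once the generating functions are aligned, equating coefficients is routine.
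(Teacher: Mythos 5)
Your proposal is correct and matches the paper's intent exactly: the paper gives no details for this theorem, stating only that it follows ``in the same way as the proof of Theorem~\ref{mainthm:01}'' as a routine exercise, and your plan --- specializing $p,q$ in Lemmas~\ref{lem:tt1} and~\ref{lem:rr1} for the totals and marginals, and matching coefficient-polynomial recurrences against \eqref{Pnx-2} and its analogue for $P_n(x)$ --- is precisely that argument carried out. The computations you do show explicitly (e.g.\ $(1-x)T_x=T$ and $(1-x)R_x=2R$ at $p=q=1$) are right, and your honest flagging of the generating-function reconciliation in (ii) and (iv) as the only nontrivial step is consistent with what the paper leaves implicit.
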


%

For convenience, we list the tables of the values of $t_{n,i,j}$ and $r_{n,i,j}$ for $1\leq n\leq 4$.

\quad\quad\quad\quad\quad\quad\quad\quad\begin{tabular}{|p{2.0in}|c|} \hline
\centering \textbf{$t_{1,i,j}$} & $j=0$  \\ \hline
\centering $i=0$ & 1 \\
 \hline
\end{tabular}

\quad\quad\quad\quad\quad\quad\quad\quad\begin{tabular}{|p{2.0in}|c|c|} \hline
\centering \textbf{$t_{2,i,j}$} & $j=0$ &$j=1$ \\ \hline
\centering $i=0$ & 0 & 1 \\
\centering $i=1$ & 1& 0 \\ \hline
\end{tabular}

\quad\quad\quad\quad\quad\quad\quad\quad\begin{tabular}{|p{2.0in}|c|c|c|} \hline
\centering \textbf{$t_{3,i,j}$} & $j=0$ &$j=1$&$j=2$ \\ \hline
\centering $i=0$ & 1 & 3&1 \\
\centering $i=1$ & 1& 0&0 \\ \hline
\end{tabular}

\quad\quad\quad\quad\quad\quad\quad\quad\begin{tabular}{|p{2.0in}|c|c|c|c|} \hline
\centering \textbf{$t_{4,i,j}$} & $j=0$ &$j=1$&$j=2$ &$j=3$\\ \hline
\centering $i=0$ & 0 & 1&3 &1\\
\centering $i=1$ & 4& 10&4 &0\\
\centering $i=2$ & 1& 0&0 &0\\ \hline
\end{tabular}



\quad\quad\quad\quad\quad\quad\quad\quad\begin{tabular}{|p{2.0in}|c|} \hline
\centering \textbf{$r_{1,i,j}$} & $j=0$  \\ \hline
\centering $i=0$ & 2 \\
 \hline
\end{tabular}

\quad\quad\quad\quad\quad\quad\quad\quad\begin{tabular}{|p{2.0in}|c|c|} \hline
\centering \textbf{$r_{2,i,j}$} & $j=0$ &$j=1$ \\ \hline
\centering $i=0$ & 0 & 2 \\
\centering $i=1$ & 4& 0 \\ \hline
\end{tabular}\\

\quad\quad\quad\quad\quad\quad\quad\quad\begin{tabular}{|p{2.0in}|c|c|c|} \hline
\centering \textbf{$r_{3,i,j}$} & $j=0$ &$j=1$ &$j=2$\\ \hline
\centering $i=0$ & 2& 12& 2\\
\centering $i=1$ & 8& 0&0 \\ \hline
\end{tabular}\\

\quad\quad\quad\quad\quad\quad\quad\quad\begin{tabular}{|p{2.0in}|c|c|c|c|} \hline
\centering \textbf{$r_{4,i,j}$} & $j=0$ &$j=1$ &$j=2$&$j=3$\\ \hline
\centering $i=0$ & 0& 2& 12&2\\
\centering $i=1$ & 16& 56&16 &0\\
\centering $i=2$ & 16& 0&0& 0\\ \hline
\end{tabular}\\



Define
\begin{equation*}
\begin{split}
\sn(x,k)&=\sum_{n\geq 0}(-1)^nJ_{2n+1}(k^2)\frac{x^{2n+1}}{(2n+1)!},\\
\cn(x,k)&=1+\sum_{n\geq 0}(-1)^nJ_{2n}(k^2)\frac{x^{2n}}{(2n)!}.
\end{split}
\end{equation*}
Note that $$J_n(k^2)=\sum_{0\leq 2i\leq n-1}J_{n,2i}k^{2i}.$$
Dumont~\cite[Corollary 1]{Dumont79} found that $s_{2n,i,0}=J_{2n,2i}$ and $s_{2n+1,i,0}=J_{2n+2,2i}$.
By comparing~\eqref{Dumont-recu} with~\eqref{t-recu-1} and~\eqref{t-recu-2}, we immediately get
the following result.
\begin{theorem}\label{Jacobi-tnij}
For $n\geq 1$, we have $J_{n,2i}=t_{n,\lrf{{n}/{2}}-i,0}$.
\end{theorem}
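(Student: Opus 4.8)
The plan is to read off the numbers $t_{n,i,0}$ from the generating function $T$ by specializing $q=0$, and then to recognize the two resulting one–variable series as disguised Maclaurin expansions of $\cn$ and $\sn$. Setting $q=0$ in the definition of $T(x,p,q)$ annihilates every monomial carrying a positive power of $q$, hence
\[
TE(x,p,0)=\sum_{n,i\ge0}t_{2n,i,0}\frac{x^{2n}}{(2n)!}p^i,\qquad
TO(x,p,0)=\sum_{n,i\ge0}t_{2n+1,i,0}\frac{x^{2n+1}}{(2n+1)!}p^i .
\]
The proof of Theorem~\ref{th_TT} already evaluates these very specializations, giving $TE(x,p,0)=\dn(Ix,\sqrt p)$ and $TO(x,p,0)=-I\sn(Ix,\sqrt p)$, and I would take these two identities as the starting point.

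Next I would bring the modulus of both Jacobi functions into the form in which the coefficients $J_n$ are defined. The even-indexed $J_{2n}$ live in $\cn$, not in $\dn$, so the classical reciprocal–modulus transformations $\dn(v,k)=\cn(kv,1/k)$ and $\sn(v,k)=\tfrac1k\sn(kv,1/k)$ are exactly what is needed; with $k=\sqrt p$ they yield $\dn(Ix,\sqrt p)=\cn(I\sqrt p\,x,\,1/\sqrt p)$ and $\sn(Ix,\sqrt p)=\tfrac1{\sqrt p}\sn(I\sqrt p\,x,\,1/\sqrt p)$. Substituting the defining series of $\cn$ and $\sn$ at modulus $1/\sqrt p$ (so that $k^2=1/p$) and simplifying $(I\sqrt p\,x)^{2n}=(-1)^np^nx^{2n}$ and $(I\sqrt p\,x)^{2n+1}=(-1)^nI\,p^n\sqrt p\,x^{2n+1}$, the spurious factors of $I$ and $\sqrt p$ cancel and I obtain
\[
TE(x,p,0)=1+\sum_{n\ge1}J_{2n}(1/p)\,p^{n}\frac{x^{2n}}{(2n)!},\qquad
TO(x,p,0)=\sum_{n\ge0}J_{2n+1}(1/p)\,p^{n}\frac{x^{2n+1}}{(2n+1)!}.
\]

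Then I would compare coefficients. For each $m\ge1$ one has $J_m(1/p)\,p^{\lrf{m/2}}=\sum_{i}J_{m,2i}\,p^{\lrf{m/2}-i}$, so matching the coefficient of $x^m p^i/m!$ in the two displays above gives $t_{m,i,0}=J_{m,2(\lrf{m/2}-i)}$ for all $m\ge1$; replacing $i$ by $\lrf{m/2}-i$ turns this into the asserted identity $J_{m,2i}=t_{m,\lrf{m/2}-i,0}$.

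The only genuinely delicate point is the middle step: one must invoke the reciprocal–modulus transformation (rather than expand $\dn$ directly) precisely so that the $\cn$–coefficients $J_{2n}$ appear, and then keep careful track of the interlocking factors $I$, $\sqrt p$ and $(-1)^n$ so that the resulting series comes out with real coefficients of the correct sign; everything else is routine coefficient extraction. I would close by noting the alternative route indicated in the paper, namely a direct comparison of~\eqref{Dumont-recu} with~\eqref{t-recu-1} and~\eqref{t-recu-2} via Dumont's identifications $s_{2n,i,0}=J_{2n,2i}$ and $s_{2n+1,i,0}=J_{2n+2,2i}$; the generating-function argument above, however, seems cleaner and treats both parities of $m$ uniformly.
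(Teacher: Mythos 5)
Your proposal is correct, and it takes a genuinely different route from the paper. The paper's own proof is essentially a two-line citation-plus-comparison: it quotes Dumont's identifications $s_{2n,i,0}=J_{2n,2i}$ and $s_{2n+1,i,0}=J_{2n+2,2i}$ and then asserts the theorem ``immediately'' by comparing the recurrence~\eqref{Dumont-recu} for $s_{n,i,j}$ with~\eqref{t-recu-1} and~\eqref{t-recu-2} for $t_{n,i,j}$. You instead read $t_{n,i,0}$ off the $q=0$ specialization of the closed form in Theorem~\ref{th_TT}, apply the reciprocal-modulus identities $\dn(v,k)=\cn(kv,1/k)$ and $\sn(v,k)=\tfrac1k\sn(kv,1/k)$ to land exactly on the defining series of $\cn$ and $\sn$, and match coefficients; I checked the bookkeeping of the factors $I$, $\sqrt p$ and $(-1)^n$ and it is right (e.g.\ it reproduces $J_4(1/p)p^2=p^2+4p=t_{4,1,0}p+t_{4,2,0}p^2$ and $t_{2n,0,0}=0$, $t_{2n+1,0,0}=J_{2n+1,2n}=1$). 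The trade-off: the paper's argument is shorter but leans on Dumont's external corollary and on a recurrence comparison that is less transparent than advertised --- the $j=0$ slices of~\eqref{Dumont-recu} and of~\eqref{t-recu-1}--\eqref{t-recu-2} are not closed (each pulls in $j=1$ terms), and Dumont's corollary only supplies the $\cn$-coefficients $J_{2n,2i}$ and $J_{2n+2,2i}$, so the odd-$n$ case of the theorem (the $\sn$-coefficients $J_{2n+1,2i}$) still needs an argument the paper does not spell out. Your route is self-contained modulo Theorem~\ref{th_TT} and the standard Jacobi transformation, and it treats both parities of $n$ uniformly, which is a genuine advantage here; its cost is that it inherits whatever ``routine verification'' underlies Theorem~\ref{th_TT}, whereas the paper's route is purely combinatorial/recursive.
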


It follows from {\it Leibniz's formula} that
\begin{equation*}
\begin{split}
D^{2n+1}(w)&=D^{2n}(wx)\\
&=\sum_{k\geq 0}\binom{2n}{2k}D^{2k}(w)D^{2n-2k}(x)+\sum_{k\geq 0}\binom{2n}{2k+1}D^{2k+1}(w)D^{2n-2k-1}(x),
\end{split}
\end{equation*}
and similarly,
\begin{equation*}
\begin{split}
D^{2n+2}(w)&=D^{2n+1}(wx)\\
&=\sum_{k\geq 0}\binom{2n+1}{2k}D^{2k}(w)D^{2n+1-2k}(x)+\sum_{k\geq 0}\binom{2n+1}{2k+1}D^{2k+1}(w)D^{2n-2k}(x).
\end{split}
\end{equation*}

Therefore, combining~\eqref{Schett-Dumontsystem}, we get
\begin{equation*}
\begin{split}
t_{2n+1,i,0}&=\sum_{k\geq 0}\binom{2n}{2k}\sum_{j=0}^it_{2k,j,0}s_{2n-2k,i-j,0},\\
t_{2n+2,i+1,0}&=\sum_{k\geq 0}\binom{2n+1}{2k+1}\sum_{j=0}^it_{2k+1,j,0}s_{2n-2k,i-j,0}.
\end{split}
\end{equation*}

Thus,
as a corollary of Theorem~\ref{Jacobi-tnij}, we get the following.
\begin{corollary}[{\cite[eq.~(20)]{Viennot80}}]
For $n\geq 0$, we have
\begin{equation*}\label{convolution}
\begin{split}
J_{2n+1,2n-2i}&=\sum_{k\geq 0}\binom{2n}{2k}\sum_{j=0}^iJ_{2k,2k-2j}J_{2n-2k,2i-2j},\\
J_{2n+2,2n-2i}&=\sum_{k\geq 0}\binom{2n+1}{2k+1}\sum_{j=0}^iJ_{2k+1,2k-2j}J_{2n-2k,2i-2j}.
\end{split}
\end{equation*}
\end{corollary}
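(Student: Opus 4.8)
The plan is to obtain both identities by a direct substitution into the two Leibniz-type convolutions for $t_{2n+1,i,0}$ and $t_{2n+2,i+1,0}$ displayed immediately before the statement. Those convolutions already package all of the combinatorics; what remains is purely to rewrite every coefficient $t_{\bullet,\bullet,0}$ and $s_{\bullet,\bullet,0}$ occurring in them in terms of the Jacobi coefficients $J_{\bullet,\bullet}$, using Theorem~\ref{Jacobi-tnij} together with Dumont's evaluation $s_{2m,\ell,0}=J_{2m,2\ell}$. Thus the whole argument reduces to a careful bookkeeping of the parity-dependent floor indices, with no summation manipulation or generating-function input required.

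First I would treat the odd-weight identity. Starting from
$$t_{2n+1,i,0}=\sum_{k\geq 0}\binom{2n}{2k}\sum_{j=0}^it_{2k,j,0}\,s_{2n-2k,i-j,0},$$
I replace the factors one at a time. Since $\lrf{2k/2}=k$, Theorem~\ref{Jacobi-tnij} gives $t_{2k,j,0}=J_{2k,2k-2j}$; Dumont's even evaluation gives $s_{2n-2k,i-j,0}=J_{2n-2k,2i-2j}$; and on the left, $\lrf{(2n+1)/2}=n$ converts $t_{2n+1,i,0}$ into $J_{2n+1,2n-2i}$. Collecting these produces exactly the first displayed formula.

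The even-weight identity I would handle in the same way, starting from
$$t_{2n+2,i+1,0}=\sum_{k\geq 0}\binom{2n+1}{2k+1}\sum_{j=0}^it_{2k+1,j,0}\,s_{2n-2k,i-j,0}.$$
Here $\lrf{(2k+1)/2}=k$ converts $t_{2k+1,j,0}$ into $J_{2k+1,2k-2j}$, the $s$-factor is translated as before, and on the left $\lrf{(2n+2)/2}=n+1$ yields $t_{2n+2,i+1,0}=J_{2n+2,2(n+1)-2(i+1)}=J_{2n+2,2n-2i}$, which is the claimed left-hand side.

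The only place where genuine care is needed — and hence the main, if modest, obstacle — is the alignment of the floor-valued second subscript of $J$ through the translation $t_{m,a,0}=J_{m,2(\lrf{m/2}-a)}$: one must track how the parity of $m$ interacts with the index shift $i\mapsto i+1$ so that both left-hand subscripts collapse to $2n-2i$. Once this index arithmetic is verified, Viennot's two identities follow immediately.
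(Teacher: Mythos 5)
Your proposal is correct and is essentially the paper's own argument: the authors likewise derive the two Leibniz convolutions for $t_{2n+1,i,0}$ and $t_{2n+2,i+1,0}$ and then obtain the corollary by substituting $J_{m,2\ell}=t_{m,\lrf{m/2}-\ell,0}$ from Theorem~\ref{Jacobi-tnij} together with Dumont's evaluation $s_{2m,\ell,0}=J_{2m,2\ell}$. Your index bookkeeping (in particular the shift $i\mapsto i+1$ making both left-hand subscripts equal to $2n-2i$) checks out.
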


Let $s_{n,i,j}$ be the numbers defined by~\eqref{Schett-Dumontsystem}.
Set $\widetilde{s}_{n,i,j}=s_{n,j,i}$, i.e., $$\widetilde{s}_{n,i,j}=|\{\pi\in\msn: X(\pi)=j, Y(\pi)=i\}|,$$
where $X(\pi)$ (resp., $Y(\pi)$) is the number of odd (resp., even) cycle peaks of $\pi$.
Based on empirical evidence, we conjecture that
\begin{equation*}
\begin{split}
\widetilde{s}_{2n+1,i,0}&=t_{2n+1,i,0},\\
\widetilde{s}_{2n+1,i,j}&=t_{2n+1,i,2j-1}+t_{2n+1,i,2j}\quad\textrm{for $j\ge 1$},\\
\widetilde{s}_{2n,i,j}&=t_{2n,i,2j}+t_{2n,i,2j+1}\quad\textrm{for $j\ge 0$}.
\end{split}
\end{equation*}

\subsection*{Acknowledgements}
S.-M. Ma is supported by NSFC (11401083), Natural Science Foundation of Hebei Province (A2017501007) and
the Fundamental Research Funds for the Central Universities (N152304006).
This work was finished while Y.-N. Yeh was visiting the School of Mathematical Sciences, Dalian University of Technology, Dalian, P.R. China and he is supported partially by NSC under the Grant No. 104-2115-M-001-010.

\end{document}